\documentclass[reqno]{amsart}
\usepackage{amsmath,amssymb,amsthm,amscd}
%       environments
%
\newtheorem{theorem}{Theorem}[section]
\newtheorem{lemma}[theorem]{Lemma}
\newtheorem{proposition}[theorem]{Proposition}

\theoremstyle{definition}
\newtheorem{remark}[theorem]{Remark}
%\newenvironment{defn**}[1][]{\noindent\textbf{#1.}\enspace\ignorespaces}{}
         % for enumerate

%\newcommand{\kl}{\text{[}}
%\newcommand{\km}{\text{][}}
%\newcommand{\kr}{\text{]}}

\newcommand{\ZZ}{\mathbb{Z}}

\newcommand{\RR}{\mathbb{R}}
\newcommand{\CC}{\mathbb{C}}

\newcommand{\matX}{\textsl{Mat}_X(\CC^*)}         % Matrix
\newcommand{\matY}{\textsl{Mat}_Y(\CC^*)}         % Matrix
         % Matrix

% 2010-07-05 skype seminar
\DeclareMathOperator{\LCM}{LCM}

\newcommand{\nexteq}{\displaybreak[0]\\ &=}

%%%%%%%%%%%%%%%%%%%%%%%%%%%%
\begin{document}
%%%%%%%%%%%%%%%%%%%%%%%%%%%%

\title{Spin models constructed from Hadamard matrices}

\author[T. Ikuta]{Takuya Ikuta}
\address{Faculty of Law, Kobe Gakuin University,
Minatojima, Chuo-ku, Kobe, 650-8586 Japan}
\email{ikuta@law.kobegakuin.ac.jp}

\author[A. Munemasa]{Akihiro Munemasa}
\address{Graduate School of Information Sciences,
Tohoku University, Sendai, 980-8579 Japan}
\email{munemasa@math.is.tohoku.ac.jp}

\subjclass[2000]{05B20,05E30}

\keywords{spin model, association scheme, Hadamard matrix}

\date{January 6, 2011}

\begin{abstract}
A spin model (for link invariants) is a square matrix $W$
which satisfies certain axioms.
For a spin model $W$, it is known that 
$W^TW^{-1}$ is 
a permutation matrix, and its order is called the index of $W$.
F.~Jaeger and K.~Nomura found spin models of index $2$, by 
modifying the construction of symmetric spin models from
Hadamard matrices.

The aim of this paper is to give a construction of 
spin models of an arbitrary even index
from any Hadamard matrix. 
In particular, we show that our spin models of indices a power of $2$ are new.
\end{abstract}

\maketitle

%%%%%%%%%%
% Introduction
%%%%%%%%%%

\section{Introduction}
The notion of spin model was introduced by V.F.R. Jones
\cite{Jo:pac} to construct invariants of  knots and links.
The original definition due to Jones requires that a spin model
be a symmetric matrix, but later 
by K.~Kawagoe, A.~Munemasa, and Y.~Watatani \cite{KMW},
a general definition allowing non-symmetric matrices is given.
In this paper, we consider spin models which are not
necessarily symmetric.

Let $X$ be a non-empty finite set.
We denote by $\matX$ the set of square matrices with non-zero complex entries
whose rows and columns are indexed by $X$. 
For $W \in \matX$ and 
$x$, $y \in X$, the $(x,y)$-entry of $W$ is denoted by $W(x,y)$.
A spin model $W \in \matX$ is defined to be a matrix 
which satisfies two conditions (type~II and
type~III; see Section~\ref{sec:2}).

One of the examples of spin models is a Potts model, defined as follows.
Let $X$ be a finite set with $r$ elements, and let $I, J \in \matX$ be the identity matrix and the all $1$'s matrix,
respectively. Let $u$ be a complex number satisfying
\begin{equation}        \label{potts}
\begin{split}
(u^2+u^{-2})^2=r \text{ if $r\geq 2$,}\\
u^4=1 \text{ if $r=1$.}
\end{split}
\end{equation}
Then a Potts model $A_u$ is defined as
%\begin{equation}        \label{potts}
\[
A_u=u^3I-u^{-1}(J-I).
\]
%\end{equation}

As examples of spin models, we know only Potts models \cite{Jo:pac,J}, 
spin models on finite abelian groups \cite{BBJ,BM},
Jaeger's Higman-Sims model \cite{J},
Hadamard models \cite{N:94,JN}, 
non-symmetric Hadamard models \cite{JN}, 
and tensor products of these.
Apart from spin models on finite abelian groups, 
non-symmetric Hadamard models are  
essentially the only known family of non-symmetric spin models.

If $W$ is a spin model, then 
by \cite[Proposition 2]{JN},
$R=W^TW^{-1}$ is a permutation matrix.
The order of $R$ as a permutation 
is called the {\em index\/} of the spin model $W$.

A {\em Hadamard matrix\/} of order $r$
is a square matrix $H$ of size $r$ with entries 
%$H \in \matY$ is a type II matrix 
%all of whose entries are 
$\pm 1$ satisfying $HH^T=I$. 
In \cite{JN}, F.~Jaeger and K.~Nomura 
%investigated spin models of index $2$,
%and they gave the following example of 
constructed {\em non-symmetric Hadamard models,\/}
which are spin models of index $2$:
%which is called a {\em non-symmetric Hadamard model\/} $W$
%(\cite[Section 5]{JN}):
\begin{equation}        \label{shsm}
W =
   \begin{array}{r@{}l}
      \left(
        \begin{array}{cc}
        \left(
        \begin{array}{rr}
        1 & 1 \\
        1 & 1
        \end{array}
        \right) \otimes A_u &
        \left(
        \begin{array}{rr}
        1 & -1 \\
        -1 & 1
        \end{array}
        \right) \otimes \xi H \\
        \left(
        \begin{array}{rr}
         -1 & 1 \\
        1 & -1
        \end{array}
        \right) \otimes \xi H^T &
        \left(
        \begin{array}{rr}
        1 & 1 \\
        1 & 1
        \end{array}
        \right) \otimes A_u
        \end{array}
      \right),
  \end{array}
\end{equation}
where $\xi$ is a primitive $8$-th root of unity,
$A_u \in \matX$ is a Potts model,
and $H \in \matX$ is a Hadamard matrix.

Note that non-symmetric Hadamard models are a modification
of the earlier Hadamard models (\cite{JN}, see also \cite[Section 5]{JN}),
defined by
\begin{equation}        \label{nshsm}
W' =
   \begin{array}{r@{}l}
      \left(
        \begin{array}{cc}
        \left(
        \begin{array}{rr}
        1 & 1 \\
        1 & 1
        \end{array}
        \right) \otimes A_u &
        \left(
        \begin{array}{rr}
        1 & -1 \\
        -1 & 1
        \end{array}
        \right) \otimes \omega H \\
        \left(
        \begin{array}{rr}
         1 & -1 \\
        -1 & 1
        \end{array}
        \right) \otimes \omega H^T &
        \left(
        \begin{array}{rr}
        1 & 1 \\
        1 & 1
        \end{array}
        \right) \otimes A_u
        \end{array}
      \right),
  \end{array}
\end{equation}
where $\omega$ is a $4$-th root of unity.

To construct spin models of index $m>2$, 
it seems natural to consider an $m\times m$ block
matrix $W=(W_{i,j})_{i,j\in \ZZ_m}$ such that each block $W_{ij}$ is the 
tensor product of
two matrices like those in (\ref{shsm}) and (\ref{nshsm}):
\begin{equation}  \label{eq:5}
   W_{ij} = S_{ij} \otimes T_{ij}
   \qquad (i,j \in \ZZ_m).
\end{equation}
Such matrices appeared in \cite[Proposition 6.2]{IN},
with the matrices $S_{ij}\in\textsl{Mat}_{\ZZ_m}(\CC^*)$ given by
\begin{equation}  \label{eq:6}
   S_{ij}(\ell, \ell') = \eta^{(\ell-\ell')(i-j)}
   \qquad (\ell,\, \ell' \in \ZZ_m),
\end{equation}
where $\eta$ is a primitive $m$-th root of unity.

In this paper, we construct an infinite class of spin models of even index
containing non-symmetric Hadamard models.
Also, we construct an infinite class of symmetric spin models
containing Hadamard models.
Our main result is as follows:

%%%%% MAIN THEOREM 1

\begin{theorem}  \label{thm:main1}
Let $r$ be a positive integer, and
let $m$ be an even positive integer.
Define $Y=\{1,\dots,r\}$, 
$X_i=\{ (i,\ell,x) \mid \ell \in \ZZ_m,\; x \in Y \}$ for 
$i \in \ZZ_m$,
and $X=X_0 \cup \cdots \cup X_{m-1}$.
Let $A_u$, $H \in \matY$ be a Potts model and a Hadamard matrix,
respectively.
Define $V_{ij}$ for $i,j\in\ZZ_m$ by
\begin{equation}  \label{con1}
V_{ij}=
\left\{
  \begin{array}{ll}
  A_u & \mbox{ if \ $i-j$ is even, } \\
  H & \mbox{ if \ $(i,j) \equiv (0,1) \pmod 2$, } \\
  H^T & \mbox{ if \ $(i,j) \equiv (1,0) \pmod 2$. }
  \end{array}
\right.
\end{equation}
Then the following statements hold:
\begin{itemize}
\item[{\rm (i)}] 
Let $a$ be a primitive $2m^2$-th root of unity.
Let $W \in \matX$ be the matrix
whose $(\alpha,\beta)$ entry is given by 
$a^{2m(\ell-\ell')(i-j)+\epsilon(i,j)} V_{ij}(x,y)$
for $\alpha=(i,\ell,x), \beta=(j,\ell',y) \in X$,
where $\epsilon(i,j)=(i-j)^2+m(i-j)$.
Then $W$ is a spin model of index $m$.
\item[{\rm (ii)}] 
Let $\eta$ be a primitive $m$-th root of unity, and let
$b$ be an $m^2$-th root of unity.
Let $W' \in \matX$ be the matrix
whose $(\alpha,\beta)$ entry is given by 
$\eta^{(\ell-\ell')(i-j)} b^{\delta(i,j)} V_{ij}$
for $\alpha=(i,\ell,x), \beta=(j,\ell',y) \in X$,
where $\delta(i,j)=(i-j)^2$.
Then $W'$ is a symmetric spin model.
\end{itemize}
\end{theorem}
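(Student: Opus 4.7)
The plan is to exploit the block decomposition $W=(W_{ij})_{i,j\in\ZZ_m}$ with $W_{ij}=S_{ij}\otimes T_{ij}$, where $S_{ij}(\ell,\ell')=\eta^{(\ell-\ell')(i-j)}$ for $\eta=a^{2m}$ (a primitive $m$-th root of unity) in~(i) and for the given $\eta$ in~(ii), and where $T_{ij}=a^{\epsilon(i,j)}V_{ij}$ in~(i) (respectively $T_{ij}=b^{\delta(i,j)}V_{ij}$ in~(ii)). This is precisely the framework \eqref{eq:5}--\eqref{eq:6}. The strategy is to verify the type~II and type~III axioms by reducing each sum to an inner character sum on $\ZZ_m$ plus an identity among the $T_{ij}$'s, the latter following from the type~II/III-style identities satisfied by Potts models and Hadamard matrices.

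For type~II, let $\alpha=(i,\ell,x)$, $\alpha_1=(i_1,\ell_1,x_1)$, $\beta=(j,\ell',y)$ and compute $\sum_{j,\ell',y}W(\alpha,\beta)/W(\alpha_1,\beta)$ by summing on $\ell'$ first: this is a character sum on $\ZZ_m$ yielding $m\delta_{i,i_1}$, which simultaneously collapses the $\epsilon$-contribution. The sum on $y$ is then $r\delta_{x,x_1}$, because every $V_{ij}\in\{A_u,H,H^T\}$ is type~II (using \eqref{potts} for $A_u$ and $HH^T=rI$, together with $H(x,y)\in\{\pm 1\}$, for $H$ and $H^T$). Finally the sum on $j$ is another character sum giving $m\delta_{\ell,\ell_1}$, so the total is $m^2r\,\delta_{\alpha,\alpha_1}=|X|\,\delta_{\alpha,\alpha_1}$. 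The type~III (star--triangle) identity will be attacked by the same two-step reduction: with $\gamma=(k,\ell'',z)$, the inner sum on $\ell''$ is a character sum on $\ZZ_m$ which is nonzero only when $k\equiv i_1+i_2-i_3\pmod m$ (where the first coordinates of the three outer arguments are $i_1,i_2,i_3$). Pinning down $k$, the remaining sum on $z$ reduces to a ``triangle identity for the $V_{ij}$'' that breaks into cases by the parities of $i_1-k$, $i_2-k$, $i_3-k$: each case is either the standard type~III identity for the Potts model $A_u$, or one of the Hadamard-style identities underlying the (non-)symmetric Hadamard models of Jaeger--Nomura. The exponents $\epsilon(i,j)=(i-j)^2+m(i-j)$ and $\delta(i,j)=(i-j)^2$ are chosen precisely so that the scalar factors produced by the character-sum step on $\ell''$ match the multiplicative constants demanded by these constituent identities in every parity case.

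Finally, for the index in~(i), a direct calculation using $V_{ji}(y,x)=V_{ij}(x,y)$ (which holds since $A_u$ is symmetric and $H^T$ is the transpose of $H$) and $\epsilon(j,i)=\epsilon(i,j)-2m(i-j)$ gives $W^T(\alpha,\beta)=a^{-2m(i-j)}W(\alpha,\beta)$; rewriting $a^{-2m(i-j)}=a^{-2m(i-j)}$ as the effect of shifting $\ell$, one finds $W^T=R_\sigma W$ for the permutation $\sigma\colon(i,\ell,x)\mapsto(i,\ell+1,x)$, so $W^TW^{-1}=R_\sigma$ has order exactly~$m$. Symmetry of $W'$ in~(ii) is immediate from $\delta(i,j)=\delta(j,i)$, $\eta^{(\ell-\ell')(i-j)}=\eta^{(\ell'-\ell)(j-i)}$, and $V_{ji}(y,x)=V_{ij}(x,y)$. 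I expect the parity-case bookkeeping of scalar factors in the type~III verification to be the main technical obstacle; everything else is character theory on $\ZZ_m$ together with direct appeals to known properties of $A_u$ and $H$.
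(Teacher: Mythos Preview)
Your plan is correct and is essentially the paper's own argument: the character-sum reductions you describe are packaged there as Lemmas~\ref{lemma:2.1}--\ref{lemma:2.3}, the parity-case ``triangle identities for the $V_{ij}$'' are exactly the five Jaeger--Nomura identities recorded as Lemma~\ref{lemma:4.1}, and your index computation via $\epsilon(j,i)=\epsilon(i,j)-2m(i-j)$ is the content of Lemma~\ref{lemma:2.3}. The one point worth sharpening is the scalar bookkeeping: the paper isolates the arithmetic identity $\lambda_\epsilon(i_1,i_2,i_3,i_0)=(i_1{+}i_2{-}i_3{-}i_0)(i_1{+}i_2{-}i_3{-}i_0{+}m)\equiv 0\pmod{2m^2}$ (Lemma~\ref{lem31}), which shows that the $a^{\epsilon}$-contributions cancel \emph{uniformly}---so the constant on the $V_{ij}$ side is the same $D_u$ in every parity case, and the ``main technical obstacle'' you anticipate is milder than you suggest.
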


Note that, in order for $a^{\epsilon(i,j)}$ 
and $b^{\delta(i,j)}$ to be well-defined,
we need to identify $\ZZ_m$ with the subset $\{0,1,\dots,m-1\}$ of integers.

\begin{remark}  \label{0902skype}
In Theorem~\ref{thm:main1} (i), if we define $S_{ij}$ by
(\ref{eq:6}) with $\eta=a^{2m}$, and $T_{ij}$ by
$T_{ij}=a^{\epsilon(i,j)}V_{ij}$, then the
$(X_i,X_j)$-block of the matrix $W$ is given by (\ref{eq:5}).
Similarly, in Theorem~\ref{thm:main1} (ii),
(\ref{eq:5}) holds with 
$T_{ij}=b^{\delta(i,j)}V_{ij}$.
\end{remark}

The spin models
$W$, $W'$ given in Theorem~\ref{thm:main1} are determined by
a Hadamard matrix $H$ of order $r$,
a complex number $u$ satisfying (\ref{potts}),
and a primitive $2m^2$-th root of unity $a$
or an $m^2$-th root of unity $b$, respectively.
Throughout this paper,
we denote by $W_{H,u,a}$, $W'_{H,u,b}$ the spin models given 
by Theorem~\ref{thm:main1} (i), (ii), respectively.

Observe that, for any spin models $W_i$ ($i=1,2$) of indices $m_i$,
their tensor product $W_1 \otimes W_2$ is also a spin model of 
index $\LCM(m_1,m_2)$.
In Section~\ref{sec:5},
we show that the non-symmetric spin model 
%we consider the decomposability of spin models 
$W_{H,u,a}$ 
%given in Theorem~\ref{thm:main1} (i).
%In \cite{JN} it is expected to obtain new nonsymmetric spin models
whose index is a power of $2$ is new in the following sense:
%In this section, we will prove the following theorem:

%%%%% MAIN THEOREM 2

\begin{theorem}  \label{thm:main2}
Let $H$ be a Hadamard matrix of order $r$.
Let $W_{H,u,a}$ be a spin model given in 
Theorem~{\rm\ref{thm:main1} (i)},
whose index $m$ is a power of\/ $2$.
If $r>4$, then $W_{H,u,a}$ cannot be decomposed into a 
tensor product of known spin models.
\end{theorem}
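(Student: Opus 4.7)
The plan is to argue by contradiction. Suppose $W_{H,u,a}\cong W_1\otimes\cdots\otimes W_k$ with each $W_i$ a known spin model of index $m_i$. Since the index of a tensor product is the least common multiple of the indices of its factors, $\LCM(m_1,\ldots,m_k)=m$ is a power of $2$, so each $m_i$ is a power of $2$. Inspecting the list of known spin models from the introduction, those of index~$1$ (Potts models, symmetric Hadamard models, Jaeger's Higman--Sims model, and symmetric spin models on finite abelian groups) may appear freely, while the only non-symmetric known models with $2$-power index are the non-symmetric Hadamard models (index~$2$) and certain spin models on abelian $2$-groups. Coupled with the size constraint $\prod_i r_i=m^2 r$, this reduces the possibilities for the shape of the decomposition to a finite list.

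The tool I would use to distinguish tensor factorisations is the Nomura algebra $\mathcal{N}(W)$, which is multiplicative with respect to tensor product: $\mathcal{N}(W_1\otimes W_2)=\mathcal{N}(W_1)\otimes\mathcal{N}(W_2)$, and whose structure has already been computed for every model appearing in the list above. Using the block decomposition recorded in Remark~\ref{0902skype}, where the matrices $S_{ij}$ carry the characters of $\ZZ_m$ and the $T_{ij}$ are Potts or Hadamard blocks, I would first compute $\mathcal{N}(W_{H,u,a})$ as the Bose--Mesner algebra of an explicit imprimitive association scheme $\mathfrak{X}_{H,u,a}$ on $X$, recording its invariants (dimension, valencies, the equitable partition into $X_0,\ldots,X_{m-1}$, and the action of the index permutation $R=W^T W^{-1}$).

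The contradiction then arises by showing that no allocation of these invariants among Nomura algebras of models in the finite list above can yield $\mathcal{N}(W_{H,u,a})$. The hypothesis $r>4$ enters at exactly this point: for $r\in\{1,2,4\}$ the Hadamard matrix $H$ is equivalent to a character table of an abelian $2$-group, so the Hadamard factor can be absorbed into a group spin model and a genuine decomposition may exist; but for $r>4$ no Hadamard matrix has this structure, and $\mathcal{N}(W_{H,u,a})$ contains a simple summand whose parameters depend non-trivially on $r$ and cannot be reproduced as a tensor factor coming from the known list. The main obstacle is precisely this last step: one must simultaneously track the sizes $r_i$ (via $\prod r_i=m^2 r$), the distribution of the index across the factors (via~$R$), and the combinatorial structure of the non-symmetric pieces, and rule out every candidate allocation. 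The preliminary computation of $\mathcal{N}(W_{H,u,a})$ itself is a routine, if lengthy, bookkeeping exercise once the block form~(\ref{eq:5})--(\ref{eq:6}) and the definitions~(\ref{con1}) are written out.
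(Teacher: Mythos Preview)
Your proposal is a research plan, not a proof: you describe computing the Nomura algebra $\mathcal{N}(W_{H,u,a})$ and then ruling out every tensor factorisation of it, but you carry out neither step. What you call ``routine bookkeeping'' and ``the main obstacle'' is in fact the entire content of the argument, and none of it is done. In particular, you would need an explicit description of $\mathcal{N}(W_{H,u,a})$ (including how it depends on $r$), explicit descriptions of $\mathcal{N}$ for every known model, and then a case analysis showing that no product of the latter matches the former. Nothing here guarantees that the Nomura algebra is fine enough to separate $W_{H,u,a}$ from all tensor products of known models; you assert that a certain simple summand ``depends non-trivially on $r$,'' but you have not exhibited it or shown it cannot be reproduced.

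The paper's proof is entirely different and far more elementary. It never touches the Nomura algebra. Instead it uses two crude numerical invariants that are multiplicative under tensor product: the set $E(W)=\{|W(x,y)|/|W(x,x)|:x,y\in X\}$ of absolute-value ratios, and $\mu(W)$, the least common multiple of the orders of those entries of $W$ that are roots of unity. The hypothesis $r>4$ enters because then $|u|\neq 1$ (Lemma~\ref{lemma:5.2}), so $E(W_{H,u,a})=\{1,|u|^{-4},|u|^{-3}\}$ has exactly three elements. An easy combinatorial lemma (Lemma~\ref{lemma:5.3}) on products of finite subsets of $\RR_{>0}$ then forces any decomposition to have at most one factor with $|E|>1$, and that factor must itself be of type (d) or (e) with the \emph{same} $r$. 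The remaining ``unit-modulus'' factors are handled by bounding $\mu$: each contributes at most a $2^{2s}$-th root of unity, contradicting the fact (Lemma~\ref{lemma:5.5}(i)) that $W_{H,u,a}$ contains a primitive $2^{2s+1}$-th root of unity. This argument is short, self-contained, and avoids any structural computation of Bose--Mesner algebras; your Nomura-algebra route, even if it could be made to work, would be substantially heavier.
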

We note that the list of known spin models is given in Section $5$.
Jaeger and Nomura \cite[p.278]{JN} expected that new 
non-symmetric spin models of index a power of $2$ should be found,
and our results confirm this expectation.

%%%%%%%%%%
% Type II and Type III conditions on T_{ij}
%%%%%%%%%%

\section{Type~II and Type~III conditions on block matrices of tensor products}\label{sec:2}

First we define a spin model.
A {\em type~II matrix\/} on a finite set $X$ is a matrix $W \in \matX$ 
%with nonzero entries 
which satisfies the {\em type~II condition}:

\begin{equation}  \label{type2}
   \sum_{x \in X} \frac{W(\alpha,x)}{W(\beta,x)} = n \delta_{\alpha,\beta}
   \qquad (\mbox{for all $\alpha, \beta \in X$}).
\end{equation}
Let $W^- \in \matX$ be defined by $W^-(x,y)=W(y,x)^{-1}$.  Then
the type~II condition is written as $W W^- = n I$.
Hence, if $W$ is a type~II matrix, then $W$ is non-singular with
$W^{-1} = n^{-1}W^-$.

A type~II matrix $W \in \matX$ is called a {\em spin model\/} if 
$W$ satisfies the {\em type~III condition\/}:
\begin{equation}  \label{type3}
  \sum_{x \in X} \frac{W(\alpha,x)W(\beta,x)}{W(\gamma,x)}
  = D \frac{W(\alpha,\beta)}{W(\alpha,\gamma)W(\gamma,\beta)}
  \qquad (\mbox{for all $\alpha,\beta,\gamma \in X$})
\end{equation}
for some nonzero real number $D$ with $D^2=n$,
which is independent of the choice of $\alpha,\beta,\gamma\in X$.

Let $m$ be a positive integer.
In this section, assuming that $W$ is an $m\times m$
block matrix with blocks of the form (\ref{eq:5}),
we will establish conditions on $T_{ij}$
under which $W$ satisfies the type~II and type~III conditions.
Some parts of these conditions are already given in
\cite[Proposition 5.1, Proposition 6.2]{IN}.

%Let $|X|=m^2r$, where $m$ and $r$ are positive integers.
%In  this paper,
%$\ZZ_m$ will denote the subset $\{0,1,\dots,m-1\}$ of integers,
%not the set of residue classes of integers modulo $m$.
%The reason why we adopt this non-standard notation
%is that we often regard elements of $\ZZ_m$ as integers,
%and that we need the addition modulo $m$ less often.
Let $\eta$ be a primitive $m$-th root of unity,
and let $S_{ij}$ be the matrix of size $m$ defined by
(\ref{eq:6}) for $i,j \in \ZZ_m$.
Let $r$ be a positive integer,
and define $Y=\{1,\dots,r\}$, 
$X_i=\{ (i,\ell,x) \mid \ell \in \ZZ_m,\; x \in Y \}$ for 
$i \in \ZZ_m$,
and $X=X_0 \cup \dots \cup X_{m-1}$.
Let $T_{ij} \in \matY$ be a matrix for $i,j \in \ZZ_m$,
and let $W_{ij}$ be the matrix defined by (\ref{eq:5}).
Let $W\in\matX$ be the matrix 
whose $(X_i, X_j)$-block is $W_{ij}$ for $i,j \in \ZZ_m$.
Then 
\begin{equation}  \label{eq:25}
W((i,\ell,x),(j,\ell',y))=S_{ij}(\ell,\ell')T_{ij}(x,y).
\end{equation}

%%%%% Lemma 2.1

\begin{lemma}[{\cite[Proposition 5.1]{IN}}]  \label{lemma:2.1}
The matrix $W$ is a type~II matrix if and only if $T_{ij}$ 
is a type~II matrix for all $i,j \in\ZZ_m$.
\end{lemma}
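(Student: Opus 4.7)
The plan is to expand the type~II condition (\ref{type2}) for $W$ via the block structure (\ref{eq:25}) and exploit the fact that, after fixing $\alpha=(i,\ell,x)$ and $\beta=(j,\ell',y)$, the sum over $\gamma=(k,\ell'',z)\in X$ factors into a product of an $\ell''$-sum involving only the $S$-factor and a $z$-sum involving only the $T$-factor:
\[
\sum_{\gamma\in X}\frac{W(\alpha,\gamma)}{W(\beta,\gamma)}
=\sum_{k\in\ZZ_m}\Bigl(\sum_{\ell''\in\ZZ_m}\frac{S_{ik}(\ell,\ell'')}{S_{jk}(\ell',\ell'')}\Bigr)
   \Bigl(\sum_{z\in Y}\frac{T_{ik}(x,z)}{T_{jk}(y,z)}\Bigr).
\]
The key observation is that (\ref{eq:6}) makes the inner $\ell''$-sum a pure character sum on $\ZZ_m$, equal to $m\,\delta_{i,j}\,\eta^{(\ell-\ell')(i-k)}$ by orthogonality of characters.

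Once this is in hand, the $i\ne j$ case of the type~II condition for $W$ is automatic, and for $i=j$ the condition reduces, upon setting $g_{i,x,y}(k):=\sum_{z\in Y}T_{ik}(x,z)/T_{ik}(y,z)$, to
\[
\sum_{k\in\ZZ_m}\eta^{(\ell-\ell')(i-k)}g_{i,x,y}(k)=mr\,\delta_{\ell,\ell'}\,\delta_{x,y}
\qquad(i\in\ZZ_m,\;\ell,\ell'\in\ZZ_m,\;x,y\in Y).
\]

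For the ``if'' direction I would assume each $T_{ik}$ is type~II so that $g_{i,x,y}(k)=r\delta_{x,y}$; then the remaining character sum $\sum_k\eta^{(\ell-\ell')(i-k)}$ evaluates to $m\delta_{\ell,\ell'}$, yielding the displayed identity. For the ``only if'' direction I would fix $i,x,y$ and read the family of identities, as $\ell-\ell'$ varies over $\ZZ_m$, as asserting that the discrete Fourier transform of $k\mapsto g_{i,x,y}(k)$ is a scalar multiple of $\delta_{s,0}$; Fourier inversion on $\ZZ_m$ then forces $g_{i,x,y}(k)=r\delta_{x,y}$ for every $k\in\ZZ_m$, which is exactly the type~II condition on $T_{ik}$.

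No serious obstacle is anticipated: the whole argument reduces to two applications of character orthogonality on $\ZZ_m$ --- once to decouple the $S$- and $T$-factors and kill the $i\ne j$ contribution, and once to invert the Fourier relation and recover type~II for each $T_{ij}$. The crucial ingredient is the pure character form (\ref{eq:6}) of $S_{ij}$, which is what makes the above factorization and subsequent inversion so clean.
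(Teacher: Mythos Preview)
Your argument is correct. The paper does not supply its own proof of this lemma; it merely invokes \cite[Proposition~5.1]{IN}. Your write-up is the natural direct verification: factor the type~II sum through the tensor structure (\ref{eq:25}), use character orthogonality on the $S$-part (\ref{eq:6}) to collapse the $\ell''$-sum to $m\,\delta_{i,j}\,\eta^{(\ell-\ell')(i-k)}$, and then recognize the remaining identity as a discrete Fourier relation on $\ZZ_m$ whose inversion is exactly the type~II condition for each $T_{ik}$. There is nothing to compare against in this paper, and your proof would serve as a self-contained substitute for the external citation.
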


%%%%% Lemma 2.2

\begin{lemma}  \label{lemma:2.2}
The matrix $W$ satisfies the type~III condition {\rm(\ref{type3})} if and only if
the following equality holds for all $i_1,i_2,i_3 \in\ZZ_m$ 
and $x_1, x_2, x_3 \in Y$:
\begin{equation}  \label{eq:21}
\sum_{x \in Y}
\frac{ T_{i_1,i_0}(x_1, x)T_{i_2,i_0}(x_2, x) }
{ T_{i_3,i_0}(x_3, x) }
=\frac{D}{m}\cdot \frac{ T_{i_1,i_2}(x_1,x_2) }
{ T_{i_1,i_3}(x_1,x_3) T_{i_3,i_2}(x_3,x_2) },
\end{equation}
where $i_0=i_1+i_2-i_3 \bmod{m}$.
\end{lemma}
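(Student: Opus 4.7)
The plan is to substitute (\ref{eq:25}) into the type~III condition (\ref{type3}), split each $W$-factor into an $S$-part (a pure power of $\eta$) and a $T$-part, and then evaluate the sum over the $\ell$-component of $X$ by character orthogonality on $\ZZ_m$.

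Fix $\alpha=(i_1,\ell_1,x_1)$, $\beta=(i_2,\ell_2,x_2)$, $\gamma=(i_3,\ell_3,x_3)\in X$, and parametrize the summation variable as $x=(i_0,\ell,y)$ with $i_0,\ell\in\ZZ_m$ and $y\in Y$. The $S$-factors contribute $\eta^{E(\ell,i_0)}$ to the summand on the left-hand side, where
\[
E(\ell,i_0)=(\ell_1-\ell)(i_1-i_0)+(\ell_2-\ell)(i_2-i_0)-(\ell_3-\ell)(i_3-i_0).
\]
The coefficient of $\ell$ in $E(\ell,i_0)$ is $-(i_1+i_2-i_3-i_0)$, so the orthogonality relation $\sum_{\ell\in\ZZ_m}\eta^{k\ell}=m$ when $m\mid k$ and $0$ otherwise forces $i_0\equiv i_1+i_2-i_3\pmod m$ and contributes an overall factor $m$. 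With this $i_0$, the $\ell$-independent part of $E$ simplifies to
\[
F:=\ell_1(i_3-i_2)+\ell_2(i_3-i_1)+\ell_3(i_1+i_2-2i_3).
\]

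On the right-hand side of (\ref{type3}), the ratio of $S$-factors contributes $\eta^{G}$ with
\[
G=(\ell_1-\ell_2)(i_1-i_2)-(\ell_1-\ell_3)(i_1-i_3)-(\ell_3-\ell_2)(i_3-i_2).
\]
A direct algebraic expansion confirms the polynomial identity $F=G$, so the common factor $\eta^{F}$ cancels on both sides. After cancellation, (\ref{type3}) at $(\alpha,\beta,\gamma)$ reduces, upon division by $m$, to (\ref{eq:21}) at the corresponding $(i_1,i_2,i_3,x_1,x_2,x_3)$, with no residual dependence on $(\ell_1,\ell_2,\ell_3)$. Since the two quantifications match, this chain of equivalences proves both directions of the lemma simultaneously.

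The only place the argument requires care is the bookkeeping to verify $F=G$ and to track the $\eta$-exponent produced by the $\ell$-summation; neither step is deep, but signs and index-shifts are easy to mishandle there.
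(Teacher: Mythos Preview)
Your argument is correct and is essentially the same as the paper's: both substitute (\ref{eq:25}) into (\ref{type3}), collect the $\eta$-powers, and use character orthogonality over $\ZZ_m$ in the $\ell$-variable to collapse the $i$-sum to the single term $i=i_1+i_2-i_3\bmod m$, after which the $\eta$-factors on the two sides cancel. The only difference is cosmetic: the paper first multiplies the left- and right-hand $\eta$-factors together and recognises the product as $\eta^{(\ell_1+\ell_2-\ell_3-\ell)(i_1+i_2-i_3-i)}$, whereas you keep the two sides separate and verify the identity $F=G$; either way the same orthogonality step does the work. (One minor notational point: you use $i_0$ for the summation index before it has been pinned down by orthogonality, which clashes with the lemma's use of $i_0$ for the fixed value $i_1+i_2-i_3\bmod m$; renaming the dummy index would make the write-up cleaner.)
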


\begin{proof}
The type~III condition (\ref{type3}) for $\alpha=(i_1,\ell_1,x_1)$,
$\beta=(i_2,\ell_2,x_2)$, $\gamma=(i_3,\ell_3,x_3)$
is equivalent to
\begin{align*}
& \sum_{i,\ell \in\ZZ_m} 
\frac{ \eta^{(\ell_1-\ell)(i_1-i)} \eta^{(\ell_2-\ell)(i_2-i)} }{ \eta^{(\ell_3-\ell)(i_3-i)} }
\sum_{x \in Y} \frac{ T_{i_1,i}(x_1, x)T_{i_2,i}(x_2, x) }
{ T_{i_3,i}(x_3, x) } \\
& = D \frac{ \eta^{(\ell_1-\ell_2)(i_1-i_2)} }
{ \eta^{(\ell_1-\ell_3)(i_1-i_3)}\eta^{(\ell_3-\ell_2)(i_3-i_2)} }
\cdot\frac{ T_{i_1,i_2}(x_1,x_2) }
{ T_{i_1,i_3}(x_1,x_3) T_{i_3,i_2}(x_3,x_2) }.
\end{align*}
By a direct computation, we obtain
\begin{align*}
 & \frac{ \eta^{(\ell_1-\ell)(i_1-i)} \eta^{(\ell_2-\ell)(i_2-i)} }
{ \eta^{-(\ell_3-\ell)(i_3-i)} } \cdot
\frac{ \eta^{(\ell_1-\ell_3)(i_1-i_3)}\eta^{(\ell_3-\ell_2)(i_3-i_2)} }
{ \eta^{(\ell_1-\ell_2)(i_1-i_2)} } \\
 & = \eta^{(\ell_1+\ell_2-\ell_3-\ell)(i_1+i_2-i_3-i)}.
\end{align*}
So (\ref{type3}) is equivalent to
\begin{align}
&  \sum_{i \in\ZZ_m} (\sum_{\ell \in\ZZ_m} 
\eta^{(\ell_1+\ell_2-\ell_3-\ell)(i_1+i_2-i_3-i)} ) 
\sum_{x \in Y}
\frac{ T_{i_1,i}(x_1,x)T_{i_2,i}(x_2, x) }
    { T_{i_3,i}(x_3, x) }  \notag\\
& = D \frac{ T_{i_1,i_2}(x_1, x_2) }
      { T_{i_1,i_3}(x_1, x_3) T_{i_3,i_2}(x_3, x_2) }.
\label{eq:22}
\end{align}
Since $\eta$ is a primitive $m$-th root of unity and 
$i_0=i_1+i_2-i_3 \bmod{m}$, we have
\[
\sum_{\ell \in\ZZ_m} 
\eta^{(\ell_1+\ell_2-\ell_3-\ell)(i_1+i_2-i_3-i)}
=m\delta_{i,i_0}.
\]
Thus (\ref{eq:22}) 
is equivalent to (\ref{eq:21}).
%\[
%\sum_{x \in Y} \frac{ T_{i_1,i_0}(x_1, x)T_{i_2,i_0}(x_2, x) }
%{ T_{i_3,i_0}(x_3, x) }=
%\frac{D}{m} \cdot \frac{ T_{i_1,i_2}(x_1, x_2) }
%{ T_{i_1,i_3}(x_1, x_3) T_{i_3,i_2}(x_3, x_2) }.
%\]
\end{proof}

We remark that in \cite[Proposition 6.2]{IN} only the 
necessity of (\ref{eq:21}) for the type~III condition is proved.

Let $z_m$ be the permutation matrix of order $m$:
\[
z_m=\left(
    \begin{array}{rrrr}
      & & & 1 \\
    1 & & & \\
      & \ddots & & \\
      & & 1 &
    \end{array}
    \right).
\]
We define the permutation matrix $R$ of size $n=m^2r$
by $R=I_m \otimes z_m \otimes I_r$,
where $I_m$ and $I_r$ are the identity matrices of size $m$ and $r$, respectively.
The order of $R$ is $m$.

%%%%% Lemma 2.3

\begin{lemma}  \label{lemma:2.3}
%Suppose that $T_{ij}$ is a type~II matrix for all $i,j\in\ZZ_m$.
The matrix $W$ satisfies $W^TW^{-1}=R$ if and only if
$T_{ij}=\eta^{i-j} T_{ji}^T$ holds for all $i,j \in\ZZ_m$.
\end{lemma}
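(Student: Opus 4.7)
The plan is to reformulate $W^TW^{-1}=R$ as the identity $W^T=RW$ (using that $W$, being composed of type~II blocks $T_{ij}$, is invertible by Lemma~\ref{lemma:2.1}, though the lemma as stated simply presumes invertibility) and then to compare the two sides entry by entry, using the explicit description of $W$ in~(\ref{eq:25}) and the fact that $R=I_m\otimes z_m\otimes I_r$ acts on an index $(i,\ell,x)$ only by a cyclic shift of the middle coordinate $\ell$.

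Concretely, I would first compute
\[
W^T((i,\ell,x),(j,\ell',y))=W((j,\ell',y),(i,\ell,x))
=\eta^{(\ell'-\ell)(j-i)}T_{ji}(y,x)
=\eta^{(\ell-\ell')(i-j)}T_{ji}^T(x,y).
\]
Next, reading off from the block form of $z_m$, I have $(z_m)_{\ell,\ell'}=\delta_{\ell,\ell'+1}$, so that
\[
(RW)((i,\ell,x),(j,\ell',y))=W((i,\ell-1,x),(j,\ell',y))
=\eta^{(\ell-1-\ell')(i-j)}T_{ij}(x,y).
\]
Equating these two expressions and cancelling the nonzero factor $\eta^{(\ell-\ell')(i-j)}$, the identity $W^T=RW$ becomes
\[
T_{ji}^T(x,y)=\eta^{-(i-j)}T_{ij}(x,y)\qquad(\text{for all }x,y\in Y),
\]
which is exactly $T_{ij}=\eta^{i-j}T_{ji}^T$. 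This gives both directions: the condition on $T_{ij}$ is equivalent to $W^T=RW$, hence (as $W$ is invertible) to $W^TW^{-1}=R$.

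I do not expect any serious obstacle; the only point requiring care is the bookkeeping for the direction of the cyclic shift induced by $z_m$, i.e.\ whether $R$ sends $\ell\mapsto \ell+1$ or $\ell\mapsto\ell-1$. Reading the displayed matrix for $z_m$ as written (the $1$'s appearing on the subdiagonal and in the top-right corner) fixes $(z_m)_{\ell,\ell'}=\delta_{\ell,\ell'+1}$, which yields the shift used above. With this convention fixed, everything reduces to a direct computation on the entries.
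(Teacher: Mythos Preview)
Your proposal is correct and follows essentially the same approach as the paper: both compute $W^T(\alpha,\beta)$ and $(RW)(\alpha,\beta)$ directly from (\ref{eq:25}) and the action of $R=I_m\otimes z_m\otimes I_r$, then compare entrywise to obtain $T_{ji}(y,x)=\eta^{-(i-j)}T_{ij}(x,y)$. Your explicit check of the convention $(z_m)_{\ell,\ell'}=\delta_{\ell,\ell'+1}$ is a welcome clarification but does not represent a different method.
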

\begin{proof}
%By Lemma~\ref{lemma:2.1}, $W$ is a type~II matrix, hence
%$W^{-1}=n^{-1}W^-$.
%Firstly we suppose that $T_{ij}=\eta^{i-j} T_{ji}^T$ for all 
%$i, j \in \ZZ_m$.
%For $\alpha=(i_1,\ell_1,x_1)$ and $\beta=(i_2,\ell_2,x_2) \in X$,
%\begin{align*}
%&(W^T W^{-1})(\alpha,\beta) 
%\\ &= 
%%\sum_{z \in X} W^T(\alpha,z) W^{-1}(z,\beta) 
%%\nexteq 
%n^{-1} \sum_{z \in X} W(z, \alpha) W(\beta,z)^{-1} 
%%\nexteq 
%%n^{-1} \sum_{i,\ell \in \ZZ_m} \sum_{x \in Y} 
%%\frac{W(i,\ell,x),(i_1,\ell_1,x_1)}{
%%W(i_2,\ell_2,x_2),(i,\ell,x)} 
%\nexteq 
%n^{-1} \sum_{i,\ell \in \ZZ_m} \sum_{x \in Y} 
%\frac{S_{i,i_1}(\ell,\ell_1) T_{i,i_1}(x,x_1)}{S_{i_2,i}(\ell_2,\ell) T_{i_2,i}(x_2,x)} 
%\nexteq 
%n^{-1} \sum_{i,\ell \in \ZZ_m} 
%\frac{\eta^{(\ell-\ell_1)(i-i_1)}}{\eta^{(\ell_2-\ell)(i_2-i)}}
%\sum_{x \in Y} \frac{\eta^{i-i_1}T_{i_1,i}(x_1,x)}{T_{i_2,i}(x_2,x)} 
%\nexteq 
%n^{-1} \sum_{i \in \ZZ_m} 
%\eta^{-\ell_1(i-i_1)-\ell_2(i_2-i)+(i-i_1)} 
%\sum_{\ell \in \ZZ_m} \eta^{\ell(i_2-i_1)} 
%\sum_{x \in Y} \frac{T_{i_1,i}(x_1,x)}{T_{i_2,i}(x_2,x)} 
%\nexteq 
%mn^{-1} \delta_{i_1,i_2}
%\sum_{i \in \ZZ_m} 
%\eta^{-\ell_1(i-i_1)-\ell_2(i_1-i)+(i-i_1)} 
%\sum_{x \in Y} \frac{T_{i_1,i}(x_1,x)}{T_{i_2,i}(x_2,x)} 
%\nexteq 
%mn^{-1} \delta_{i_1,i_2} 
%\sum_{i\in \ZZ_m} \eta^{(i_1-i)(\ell_1-\ell_2-1)} \sum_{x \in Y} \frac{T_{i_1,i}(x_1,x)}{T_{i_1,i}(x_2,x)} 
%\nexteq mn^{-1} \delta_{i_1,i_2} \cdot m\delta_{\ell_1,\ell_2+1} \cdot r\delta_{x_1,x_2} 
%\nexteq \delta_{i_1,i_2}\delta_{\ell_1,\ell_2+1}\delta_{x_1,x_2}.
%\end{align*}
%Therefore we have $W^TW^{-1}=R$.

%Next we supose that $R=W^T W^{-1}$ holds. Then for
For
$\alpha=(i,\ell,x)$ and $\beta=(j,\ell',y) \in X$,
\begin{align*}
W^T(\alpha, \beta) &= W(\beta,\alpha) 
\nexteq
\eta^{(\ell'-\ell)(j-i)} T_{j,i}(y, x), 
\displaybreak[0]\\
(RW)(\alpha, \beta) &= 
( (I_m \otimes z_m \otimes I_r)W )
((i,\ell,x), (j,\ell',y))
\nexteq
W((i,\ell-1,x), (j,\ell',y)) 
\nexteq 
\eta^{(\ell-1-\ell')(i-j)} T_{ij}(x, y)
\nexteq 
\eta^{(\ell'-\ell)(j-i)}\eta^{-(i-j)} 
T_{ij}(x, y).
\end{align*}
%$\alpha=(i_1,\ell_1,x_1)$ and $\beta=(i_2,\ell_2,x_2) \in X$,
%\begin{align*}
%W^T(\alpha, \beta) &= W(\beta,\alpha) 
%%\nexteq 
%%S_{i_2,i_1}(\ell_2,\ell_1) T_{i_2,i_1}(x_2, x_1) 
%\nexteq 
%\eta^{(\ell_2-\ell_1)(i_2-i_1)} T_{i_2,i_1}(x_2, x_1), \\
%(RW)(\alpha, \beta) &= 
%( (I_m \otimes z_m \otimes I_r)W )
%((i_1,\ell_1,x_1), (i_2,\ell_2,x_2))
%\nexteq
%W((i_1,\ell_1-1,x_1), (i_2,\ell_2,x_2)) 
%%\nexteq 
%%S_{i_1,i_2}(\ell_1-1,\ell_2)T_{i_1,i_2}(x_1, x_2) 
%\nexteq 
%\eta^{(\ell_1-1-\ell_2)(i_1-i_2)} T_{i_1,i_2}(x_1, x_2)
%\nexteq 
%\eta^{(\ell_2-\ell_1)(i_2-i_1)}\eta^{-(i_1-i_2)} 
%T_{i_1,i_2}(x_1, x_2).
%\end{align*}
Therefore $R=W^T W^{-1}$ if and only if
$T_{ji}(y,x)=\eta^{-(i-j)}T_{ij}(x,y)$ holds for
all $i,j\in\ZZ_m$ and $x,y\in Y$.
%we have $T_{i_1,i_2}=\eta^{i_1-i_2} T_{i_2,i_1}^T$.
\end{proof}

%From Lemma~\ref{lemma:2.3} and the definition of an index,
%a spin model with $T_{ij}=\eta^{i-j} T_{ji}^T$ for all $i,j \in\ZZ_m$ has index $m$.

%%%%%%%%%%
% Proof of Theorem~\ref{thm:main1}
%%%%%%%%%%

\section{Proof of Theorem~\ref{thm:main1}}

From Remark~\ref{0902skype},
the results in Section~2 can be used for the matrices
$W$ and $W'$ given in Theorem~\ref{thm:main1},
if we define $T_{ij}$ according to Remark~\ref{0902skype}.

%We denote $T_{ij} $ by $b^{\delta(i,j)}V_{ij}$ given in Theorem~\ref{thm:main1} (ii) for $i,j \in \ZZ_m$.
%Similarly, $W'$ given in Theorem~\ref{thm:main1} (ii) is also expressed by $(\ref{eq:25})$.
%Therefore, $W$ and $W'$ given in Theorem~\ref{thm:main1} are applied for the results in Section $2$.

For a mapping $g$ from $\ZZ^2$ to $\ZZ$,
we denote by $\lambda_g$ the mapping from 
$\ZZ^4$ to $\ZZ$ defined by
\begin{equation}  \label{eq:31}
\lambda_g(i_1,i_2,i_3,i_4)
=g(i_1,i_4)+g(i_2,i_4)-g(i_3,i_4)+g(i_1,i_3)+g(i_3,i_2)-g(i_1,i_2).
\end{equation}
Recall that we regard $\ZZ_m$ as the subset
$\{0,1,\dots,m-1\}$ of $\ZZ$, and
$\delta,\epsilon:\ZZ^2\to\ZZ$ are defined by
$\delta(i,j)=(i-j)^2$, $\epsilon(i,j)=\delta(i,j)+m(i-j)$,
respectively.

%%%%% Lemma 3.1

\begin{lemma}  \label{lem31}
For all $i_1,i_2,i_3,i_4 \in\ZZ$, we have
\begin{eqnarray*}
\lambda_{\delta}(i_1,i_2,i_3,i_4) &=& 
( i_1+i_2-i_3-i_4 )^2, \\
\lambda_{\epsilon}(i_1,i_2,i_3,i_4) &=& 
( i_1+i_2-i_3-i_4 ) ( i_1+i_2-i_3-i_4+m ).
\end{eqnarray*}
In particular,
if $i_0=i_1+i_2-i_3 \pmod{m}$,
then 
\begin{eqnarray*}
\lambda_{\delta}(i_1,i_2,i_3,i_0) & \equiv & 0 \pmod{m^2}, \\
\lambda_{\epsilon}(i_1,i_2,i_3,i_0) & \equiv & 0 \pmod{2m^2}.
\end{eqnarray*}
\end{lemma}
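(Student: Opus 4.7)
The plan is to prove both identities by direct expansion, exploiting the fact that the operator $\lambda_{(\cdot)}$ is $\mathbb{Z}$-linear in its function argument, and then read off the congruences.

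First I would handle $\lambda_\delta$. Substituting $\delta(i,j)=(i-j)^2$ into the definition \eqref{eq:31} and expanding each of the six squares gives a polynomial in $i_1,i_2,i_3,i_4$. A short tally of coefficients shows that each $i_k^2$ occurs with coefficient $1$, while the six mixed terms $i_ki_l$ assemble precisely into the cross terms of $(i_1+i_2-i_3-i_4)^2$ (namely $+2i_1i_2, -2i_1i_3, -2i_1i_4, -2i_2i_3, -2i_2i_4, +2i_3i_4$). This gives the first identity.

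For $\lambda_\epsilon$ I would use $\epsilon(i,j)=\delta(i,j)+m(i-j)$ together with the obvious linearity
\[
\lambda_{g_1+g_2}=\lambda_{g_1}+\lambda_{g_2},\qquad \lambda_{cg}=c\,\lambda_g,
\]
to reduce to $\lambda_\epsilon=\lambda_\delta+m\lambda_{g_0}$ with $g_0(i,j)=i-j$. Since $g_0$ is linear, computing $\lambda_{g_0}$ from \eqref{eq:31} is a one-line cancellation that yields $\lambda_{g_0}(i_1,i_2,i_3,i_4)=i_1+i_2-i_3-i_4$. Combining with the previous step,
\[
\lambda_\epsilon(i_1,i_2,i_3,i_4)=(i_1+i_2-i_3-i_4)^2+m(i_1+i_2-i_3-i_4),
\]
which factors as claimed.

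Finally, for the congruence statements, the hypothesis $i_0\equiv i_1+i_2-i_3\pmod{m}$ means $i_1+i_2-i_3-i_0=mk$ for some $k\in\mathbb{Z}$. Substituting into the closed forms just obtained gives $\lambda_\delta=m^2k^2$ and $\lambda_\epsilon=m^2k(k+1)$; since $k(k+1)$ is always even, the latter is divisible by $2m^2$, and both congruences follow.

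There is no real obstacle here; the only mildly delicate point is to set up the computation so that the two identities are not proved by brute force independently. Using the linearity of $\lambda_g$ in $g$ lets the $\epsilon$ case piggyback on the $\delta$ case and makes the factorization $(i_1+i_2-i_3-i_4)(i_1+i_2-i_3-i_4+m)$ transparent, which in turn is exactly what makes the extra factor of $2$ appear in the modulus for $\lambda_\epsilon$.
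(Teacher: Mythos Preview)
Your proof is correct; the paper's own proof is simply ``Straightforward,'' so your explicit expansion (with the nice shortcut of using linearity of $g\mapsto\lambda_g$ to handle $\epsilon$ via $\delta$ plus the linear part) is exactly the intended direct verification. The observation that $k(k+1)$ is even is precisely what produces the extra factor of $2$ in the $\lambda_\epsilon$ congruence.
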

\begin{proof}
Straightforward.
\end{proof}

In \cite[$\S5.1$]{JN}, the following is used to construct
non-symmetric or symmetric Hadamard models:

%%%%% Lemma 4.1

\begin{lemma}[{\cite[\S5.1]{JN}}]  \label{lemma:4.1}
Let $A_u$, $H \in \matY$ be a Potts model and a Hadamard matrix,
respectively.
Then the following holds for all $x_1, x_2, x_3 \in Y$:
\begin{eqnarray}
\sum_{y \in Y} \frac{A_u(x_1, y)A_u(x_2, y)}{A_u(x_3, y)} 
&=& D_u \frac{A_u(x_1, x_2)}{A_u(x_1, x_3)A_u(x_3, x_2)}, \label{eq:88} \\
\sum_{y \in Y} A_u(x_1, y)H(y, x_2)H(y, x_3) 
&=& D_u \frac{H(x_1, x_2)H(x_1, x_3)}{A_u(x_2, x_3)}, \label{eq:8} \\
\sum_{y \in Y} A_u(x_1, y)H(x_2, y)H(x_3, y) 
&=& D_u \frac{H( x_2, x_1)H(x_3, x_1)}{A_u(x_2, x_3)}, \label{eq:9} \\
%\sum_{y \in Y} \frac{H(y, x_2)H(y, x_3)}{A_u(x_1, y)} 
%&=& D_u A_u(x_2, x_3)H(x_1, x_2)H(x_1, x_3), \label{eq:10} \\
%\sum_{y \in Y} \frac{H(x_2, y)H(x_3, y)}{A_u(x_1, y)} 
%&=& D_u A_u(x_2, x_3)H(x_2, x_1)H(x_3, x_1), \label{eq:11}
\sum_{y \in Y} \frac{H(y, x_1)H(y, x_2)}{A_u(x_3, y)} 
&=& D_u A_u(x_1,x_2)H(x_3, x_1)H(x_3, x_2), \label{eq:10} \\
\sum_{y \in Y} \frac{H(x_1, y)H(x_2, y)}{A_u(x_3, y)} 
&=& D_u A_u(x_1, x_2)H(x_1, x_3)H(x_2, x_3), \label{eq:11}
\end{eqnarray}
where 
\[
D_u=
\begin{cases}
-u^2-u^{-2} &\text{if } |Y| \geq 2, \\
u^2 & \text{if } |Y|=1.
\end{cases}
\]
\end{lemma}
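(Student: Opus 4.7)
The plan is to verify each of the five identities by a direct case analysis exploiting the two-valued nature of the Potts model together with the orthogonality of the Hadamard matrix. Write $\alpha = u^3$ and $\beta = -u^{-1}$, so that $A_u(x,y) = \alpha$ when $x = y$ and $A_u(x,y) = \beta$ otherwise, while $H(x,y)^2 = 1$ and
\[
\sum_{y \in Y} H(y,a)H(y,b) = \sum_{y \in Y} H(a,y)H(b,y) = r\,\delta_{a,b}.
\]
In the main case $|Y| \geq 2$, (\ref{potts}) together with the definition of $D_u$ yields $D_u^2 = r$ and the two scalar identities $D_u/\beta = \alpha - \beta$ and $D_u/\alpha = (\alpha - \beta) + \beta r$; these are what all the case checks collapse down to.

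Identity (\ref{eq:88}) is precisely the type~III condition for the Potts model and can be cited directly from Jones \cite{Jo:pac}; alternatively the same style of case analysis used below reproves it.

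For (\ref{eq:8}) and (\ref{eq:9}), I would split the sum according to whether $y = x_1$ or $y \neq x_1$, producing
\[
\sum_{y \in Y} A_u(x_1,y) H(y,x_2) H(y,x_3)
= (\alpha - \beta)H(x_1,x_2)H(x_1,x_3) + \beta r\,\delta_{x_2,x_3},
\]
and then compare with the right-hand side in the two subcases $x_2 = x_3$ (using $D_u/\alpha = (\alpha - \beta) + \beta r$) and $x_2 \neq x_3$ (using $D_u/\beta = \alpha - \beta$). Identity (\ref{eq:9}) is handled identically after transposing $H$.

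For (\ref{eq:10}) and (\ref{eq:11}), I use the analogous splitting $1/A_u(x_3,y) = \beta^{-1} + (\alpha^{-1} - \beta^{-1})\,\delta_{y,x_3}$, which reduces the left-hand sums to
\[
(\alpha^{-1} - \beta^{-1})H(x_3,x_1)H(x_3,x_2) + \beta^{-1}r\,\delta_{x_1,x_2},
\]
and I again compare the $x_1 = x_2$ and $x_1 \neq x_2$ subcases with the right-hand side $D_u A_u(x_1,x_2)H(x_3,x_1)H(x_3,x_2)$. There is no analytic difficulty; the only real obstacle is organizational, namely keeping careful track of which argument of $H$ is summed and of the $\pm 1$ signs arising from the Hadamard entries. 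The degenerate case $|Y| = 1$, in which $D_u = u^2$ and $u^4 = 1$ by the second clause of (\ref{potts}), is handled as a separate final sanity check: the off-diagonal subcases are empty and all five identities reduce directly to $u^4 = 1$.
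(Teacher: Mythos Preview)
Your argument is correct: the two scalar identities $D_u/\beta=\alpha-\beta$ and $D_u/\alpha=(\alpha-\beta)+\beta r$ (equivalently, $\alpha^{-1}-\beta^{-1}=D_u\beta$ and $(\alpha^{-1}-\beta^{-1})+\beta^{-1}r=D_u\alpha$ for the reciprocal versions needed in (\ref{eq:10}) and (\ref{eq:11})) are exactly what make all the case splits close, and your derivations of them from (\ref{potts}) check out.

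The paper, however, does not prove this lemma at all: it is stated as a quotation from \cite[\S5.1]{JN} and used as a black box. So your approach differs from the paper's simply in that you give a self-contained elementary verification where the paper appeals to the literature. What you gain is independence from the cited source; what the paper gains is brevity, since these identities are precisely the ones Jaeger and Nomura already isolated when constructing the original Hadamard and non-symmetric Hadamard models.
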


We now prove Theorem~\ref{thm:main1}.
Since $A_u$ and $H$ are type~II matrices,
so are the matrices $T_{ij}=a^{\epsilon(i,j)}V_{ij}$
or $b^{\delta(i,j)}V_{ij}$. Thus, Lemma~\ref{lemma:2.1}
implies that $W_{H,u,a}$ and $W'_{H,u,b}$ are
type~II matrices. 

We claim
\begin{equation}  \label{eq:41}
\sum_{y \in Y} \frac{ V_{i_1,i_0}(x_1,y)V_{i_2,i_0}(x_2,y) }
{ V_{i_3,i_0}(x_3,y) }=
D_u \frac{ V_{i_1,i_2}(x_1,x_2) }
{ V_{i_1,i_3}(x_1,x_3) V_{i_3,i_2}(x_3,x_2) }
\end{equation}
for all $i_1,i_2,i_3 \in\ZZ_m$ and $x_1,x_2,x_3 \in Y$,
where $i_0=i_1+i_2-i_3 \bmod{m}$.
Indeed, let $i_1,i_2,i_3\in\ZZ_m$. Then
\[
(\ref{eq:41}) \Longleftrightarrow
\begin{cases}
(\ref{eq:88}) & \text{if} \quad (i_1,i_2,i_3) \equiv (0,0,0), (1,1,1) \pmod 2, \\
(\ref{eq:8}) & \text{if} \quad (i_1,i_2,i_3) \equiv (0,1,1)
%, (1,0,1) 
\pmod 2, \\
(\ref{eq:9}) & \text{if} \quad (i_1,i_2,i_3) \equiv (1,0,0)
%, (0,1,0) 
\pmod 2, \\
(\ref{eq:10}) & \text{if} \quad (i_1,i_2,i_3) \equiv (1,1,0) \pmod 2, \\
(\ref{eq:11}) & \text{if} \quad (i_1,i_2,i_3) \equiv (0,0,1) \pmod 2.
\end{cases}
\]
Moreover, when $(i_1,i_2,i_3) \equiv (1,0,1),(0,1,0) \pmod2$,
(\ref{eq:41}) is equivalent to (\ref{eq:8}), (\ref{eq:9}),
respectively, with $x_1$ and $x_2$ switched.
Therefore, (\ref{eq:41}) holds in all cases
by Lemma~\ref{lemma:4.1}.

First, we show that $W_{H,u,a}$ and $W'_{H,u,b}$ satisfy the condition (\ref{eq:21}).
From Lemma~\ref{lem31} we have
\[
a^{\lambda_{\epsilon}(i_1,i_2,i_3,i_0)}=1 \quad \text{and} \quad b^{\lambda_{\delta}(i_1,i_2,i_3,i_0)}=1.
\]
In view of (\ref{eq:31}), these imply
\begin{equation}  \label{eq:43}
c^{ g(i_1,i_0)+g(i_2,i_0)-g(i_3,i_0) }=c^{ g(i_1,i_2)-g(i_1,i_3)-g(i_3,i_2) },
\end{equation}
where $(c,g)=(a,\epsilon),(b,\delta)$.
Combining (\ref{eq:41}) and (\ref{eq:43}),
we obtain
\begin{align*}
 & \sum_{y \in Y} \frac{ c^{ g(i_1,i_0) }V_{i_1,i_0}(x_1,y) c^{ g(i_2,i_0) }V_{i_2,i_0}(x_2,y) }
{ c^{ g(i_3,i_0) }V_{i_3,i_0}(x_3,y) } \\
 & =D_u \frac{ c^{ g(i_1,i_2) }V_{i_1,i_2}(x_1,x_2) }
{ c^{ g(i_1,i_3) }V_{i_1,i_3}(x_1,x_3) c^{ g(i_3,i_2) }V_{i_3,i_2}(x_3,x_2) }.
\end{align*}
for all $i_1,i_2,i_3 \in\ZZ_m$ and $x_1,x_2,x_3 \in Y$.
%where $(c,g)=(a,\epsilon),(b,\delta)$.
Thus (\ref{eq:21}) holds by setting $D=mD_u$.
It follows from Lemma~\ref{lemma:2.2} that
$W_{H,u,a}$ and $W'_{H,u,b}$ satisfy the type~III condition
(\ref{type3}), and hence they are spin models.
Since $\delta(i,j)=\delta(j,i)$, 
$W'_{H,u,b}$ is symmetric.

%From Lemma~\ref{lemma:2.2}, 
%$W_{H,u,a}$ and $W'_{H,u,b}$ are spin models.
%It is trivial that $W'_{H,u,b}$ is a symmetric spin model.

Finally, we show that $W_{H,u,a}$ has index $m$.
Since $a^{2m}=\eta$,
we have $a^{\epsilon(i,j)-\epsilon(j,i)}=a^{2m(i-j)}=\eta^{i-j}$.
So, $T_{ij}=\eta^{i-j}T_{ji}^T$ holds for all $i,j\in\ZZ_m$.
%But 
%\[
%T_{ij}=\eta^{i-j}T_{ji}^T
%\]
%does not hold.
From Lemma~\ref{lemma:2.3}, $W_{H,u,a}$ has index $m$.
This completes the proof of Theorem~\ref{thm:main1}.

%%%%%%%%%%
% Properties of spin models in Theorem~\ref{thm:main1}
%%%%%%%%%%

\section{Properties of spin models in Theorem~\ref{thm:main1}}

For a positive integer $r$, we let $u$ be a complex number satisfying (\ref{potts}).

%\begin{equation} \label{eq:500}
%u^4+u^{-4}=r-2.
%\end{equation}

%%%%% Lemma 5.2

\begin{lemma} \label{lemma:5.2}
%If $r>4$, then $u$ is not a root of unity.
If $r\leq 4$, then $u$ is a root of unity.
Otherwise, $|u|\not=1$.
If $r\geq 4$ or $r=1$, then $u^4>0$.
\end{lemma}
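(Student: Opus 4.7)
The plan is to read everything off the defining relation (\ref{potts}). The case $r=1$ is immediate, since $u^4=1$ gives simultaneously that $u$ is a root of unity and that $u^4>0$. For $r\geq 2$ I would set $t=u^2$ and $s=u^2+u^{-2}$; then $s^2=r$, so $s=\pm\sqrt{r}$ is real, and $t$ satisfies the monic quadratic $t^2-st+1=0$ with solutions
\[
t=\frac{s\pm\sqrt{r-4}}{2}.
\]
The three assertions of the lemma then split naturally according to the sign of the discriminant $r-4$.

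When $r\geq 5$, the discriminant is a positive real, so both choices of $t$ are real and nonzero; this already gives $u^4=t^2>0$ for the third claim. By Vieta the two values of $t$ multiply to $1$, and since $r>4$ they are real and distinct, so one has $|t|>1$ and the other $|t|<1$. Either way $|u^2|\neq 1$, hence $|u|\neq 1$, handling the second claim.

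When $r=4$ the discriminant vanishes, so $t=s/2=\pm 1$ and $u^4=t^2=1$; this settles both the first and third claims in this sub-case at once. The remaining work is for $r\in\{2,3\}$: here $r-4<0$, so $\sqrt{r-4}=i\sqrt{4-r}$ and
\[
|t|^2=\frac{s^2+(4-r)}{4}=\frac{r+(4-r)}{4}=1,
\]
which by itself only gives $|u|=1$. To upgrade this to ``root of unity'' I would list the four candidate values of $t$ explicitly: for $r=2$ they are $(\pm 1\pm i)/\sqrt{2}$, the primitive $8$-th roots of unity, so $u$ is a $16$-th root of unity; for $r=3$ they are $(\pm\sqrt{3}\pm i)/2$, the primitive $12$-th roots of unity, so $u$ is a $24$-th root of unity. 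I do not anticipate any real obstacle; the only mildly non-routine step is this explicit identification of $t$ for $r=2,3$, and it is the standard observation underlying the existence of Potts models at these small parameters.
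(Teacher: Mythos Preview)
Your proof is correct and proceeds along the same lines as the paper's. The paper is briefer: for $r>4$ it argues that $|u|=1$ would give $r=(u^2+u^{-2})^2\leq |u|^4+2+|u|^{-4}=4$ (a triangle-inequality bound in place of your explicit quadratic/Vieta computation), and for the remaining two claims it simply asserts they follow from the defining relation~(\ref{potts}), the case-checks for $r\in\{2,3,4\}$ that you spell out being left as routine.
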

\begin{proof}
If $u$ is a root of unity and $r>1$, then $r=(u^2+u^{-2})^2\leq |u|^4+2+|u|^{-4}=4$.
%then $r-2=|u^4+u^{-4}|\leq |u^4|+|u^{-4}|=2$, and hence $r\leq 4$.
It is easy to see that $u$ is indeed a root of unity if $r\leq4$.
If $r\geq 4$ or $r=1$, then we have $u^4>0$ from (\ref{potts}).
\end{proof}

For a matrix $W \in \matX$, we define
\[
E(W)=\{ \frac{|W(x,y)|}{|W(x,x)|} \mid x,y \in X\} \subset \RR_{>0}.
\]
Then
\begin{equation}  \label{eq:6000}
E(W_1 \otimes W_2)=E(W_1)E(W_2)
\end{equation}
holds for any matrices $W_1,W_2$ with nonzero entries.

For the remainder of this section,
let $W_{H,u,a}$, $W'_{H,u,b}$ be the spin models given in
Theorem~\ref{thm:main1}(i) and (ii), respectively.
This means that $m$ is an even positive integer,
$a$ is a primitive $2m^2$-th root of unity,
$b$ is an $m^2$-th root of unity,
and $H$ is a Hadamard matrix of order $r$.

%%%%% Lemma 5.4

\begin{lemma} \label{lemma:5.4}
We have
%\begin{equation} \label{eq:3000}
\[
E(W_{H,u,a})=E(W'_{H,u,b})=
\begin{cases}
\{1, |u|^{-4}, |u|^{-3}\} &\text{if $r>4$}, \\
\{1\} &\text{otherwise}.
\end{cases}
\]
%\end{equation}
\end{lemma}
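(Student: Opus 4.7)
The plan is to reduce everything to the absolute values of entries of the constituent matrices $A_u$ and $H$, since all of the scalar twisting factors appearing in $W_{H,u,a}$ and $W'_{H,u,b}$ are roots of unity. Concretely, $a$, $b$, $\eta$ and each entry $S_{ij}(\ell,\ell')$ from (\ref{eq:6}) have modulus $1$, so for $\alpha=(i,\ell,x)$ and $\beta=(j,\ell',y)$ I would observe that
\[
|W_{H,u,a}(\alpha,\beta)|=|W'_{H,u,b}(\alpha,\beta)|=|V_{ij}(x,y)|.
\]
This immediately yields $E(W_{H,u,a})=E(W'_{H,u,b})$, so it suffices to describe this common set.

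Next, I would set the diagonal scale. Taking $\alpha=\beta$ forces $i=j$ and $x=y$, so the relevant block is $V_{ii}=A_u$; since $A_u=u^3I-u^{-1}(J-I)$, the diagonal entry $A_u(x,x)$ equals $u^3$, and hence every diagonal entry of $W_{H,u,a}$ has modulus $|u|^3$. A short case split on the parity of $i-j$ and on whether $x=y$ then accounts for all ratios. If $i-j$ is even and $x=y$, the ratio is $1$. If $i-j$ is even and $x\neq y$, the entry of $V_{ij}=A_u$ has modulus $|u|^{-1}$, giving ratio $|u|^{-4}$. If $i-j$ is odd, then $V_{ij}\in\{H,H^T\}$ has $\pm1$ entries, giving ratio $|u|^{-3}$. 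Therefore $E(W_{H,u,a})\subseteq\{1,|u|^{-4},|u|^{-3}\}$.

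Finally, I would separate the two regimes using Lemma~\ref{lemma:5.2}. If $r\leq 4$, then $u$ is a root of unity, so $|u|=1$ and all three candidate values collapse to $\{1\}$. If $r>4$, then $|u|\neq 1$, so $1$, $|u|^{-3}$, $|u|^{-4}$ are pairwise distinct; I would then confirm that each value is actually attained, namely $1$ from any diagonal position, $|u|^{-4}$ by picking $i=j$ with $x\neq y$ (available since $r\geq 2$), and $|u|^{-3}$ by picking $i,j$ of opposite parities in $\ZZ_m$ (available since $m$ is positive and even). The bulk of the argument is mechanical bookkeeping of moduli; the only subtle point is the dichotomy between $r\leq 4$ and $r>4$, which is precisely what Lemma~\ref{lemma:5.2} provides.
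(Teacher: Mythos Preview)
Your proposal is correct and is precisely the detailed argument underlying the paper's one-line proof (``Immediate from Theorem~\ref{thm:main1} and Lemma~\ref{lemma:5.2}''): you unpack the entries from Theorem~\ref{thm:main1}, observe that all scalar factors are roots of unity so that only $|V_{ij}(x,y)|$ matters, and then invoke Lemma~\ref{lemma:5.2} to separate the $r\le4$ and $r>4$ cases.
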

\begin{proof}
Immediate from Theorem~\ref{thm:main1} and Lemma~\ref{lemma:5.2}.
\end{proof}

%Let $H$ be a Hadamard matrix of order $r$ contained in $W_{H,u,a}$ or $W'_{H,u,b}$.

%%%%% Lemma 5.5

\begin{lemma} \label{lemma:5.5}
\begin{itemize}
\item[{\rm (i)}] 
Suppose $r \geq 4$ or $r=1$. Then the entries of $W_{H,u,a}$, $W'_{H,u,b}$ which have absolute value $1$ are
$2m^2$-th roots of unity, $m^2$-th roots of unity, respectively.
Moreover, $W_{H,u,a}$ contains a primitive $2m^2$-th root of unity as one of its entries.
\item[{\rm (ii)}] Suppose $r=2$, and put $\nu=\LCM(2m^2,16)$, 
$\nu'=\LCM(m^2,16)$. 
Then the entries of $W_{H,u,a}$, $W'_{H,u,b}$ are 
$\nu$-th roots of unity,
$\nu'$-th roots of unity, respectively.
Moreover, $W_{H,u,a}$ contains a primitive 
$\nu$-th root of unity as one of its entries.
%The entries of $W'_{H,u,b}$ are an $\tilde{m'}$-th root of unity.
\end{itemize}
\end{lemma}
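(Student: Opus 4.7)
My strategy is to write every entry of the two spin models explicitly and then read off its order in the multiplicative group of roots of unity.

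Every entry of $W_{H,u,a}$ has the shape $a^{p}V_{ij}(x,y)$ (with $p$ an integer depending on $(i,j,\ell,\ell')$ and $V_{ij}(x,y)\in\{u^{3},-u^{-1},\pm 1\}$), and every entry of $W'_{H,u,b}$ has the analogous shape $\eta^{q}b^{\delta(i,j)}V_{ij}(x,y)$. For the containment assertion in~(i), I would use Lemma~\ref{lemma:5.2} together with a direct solution of $(u^{2}+u^{-2})^{2}=r$, which gives $u^{4}=1$ when $r\in\{1,4\}$ and $|u|\ne 1$ when $r>4$. In every case of~(i), the unit-modulus entries of $V_{ij}$ are fourth roots of unity; since $m$ is even, $4$ divides both $2m^{2}$ and $m^{2}$, and multiplying by $a^{p}$ (a $2m^{2}$-th root) or by $\eta^{q}b^{\delta(i,j)}$ (an $m^{2}$-th root) gives the two containments. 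For~(ii), solving the Potts equation for $r=2$ gives $u^{2}\in\{e^{\pm i\pi/4},e^{\pm 3i\pi/4}\}$, so $u$ is a primitive $16$-th root of unity, every entry of $V_{ij}$ is a $16$-th root of unity, and the containments of all entries in the groups of $\nu$-th and $\nu'$-th roots of unity follow.

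For the primitivity assertion in~(i), I would exhibit the entry of $W_{H,u,a}$ at $(i,j)=(0,1)$, $\ell=\ell'$, and any $x,y\in Y$, which equals $\pm a^{\epsilon(0,1)}=\pm a^{1+m}$. Since $m$ is even, $1+m$ is odd with $\gcd(1+m,m)=1$, so $\gcd(1+m,2m^{2})=1$; the same argument applied to $1+m+m^{2}$ absorbs the $\pm 1$ factor, so the entry is a primitive $2m^{2}$-th root of unity.

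The main obstacle is the primitivity assertion in~(ii), which depends on the $2$-adic valuation of $m$. If $4\mid m$, then $\nu=2m^{2}$ and the entry $\pm a^{1+m}$ above already has order $\nu$. If instead $m\equiv 2\pmod 4$, write $m=2m_{1}$ with $m_{1}$ odd, so $\nu=4m^{2}=16m_{1}^{2}$; now $\pm a^{1+m}$ has order only $\nu/2$ and $u^{3}$ alone has order only $16$, so a mixed entry is required. For $m=2$ the entry $u^{3}$ (at $i=j=0$, $\ell=\ell'$, $x=y$) has order $16=\nu$. For $m\ge 6$ I would take the diagonal entry of the block $(i,j)=(2,0)$, namely $a^{\epsilon(2,0)}u^{3}=a^{4+2m}u^{3}$. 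Writing $a=\zeta_{\nu}^{2\beta}$ and $u=\zeta_{\nu}^{m_{1}^{2}\alpha}$ with $\gcd(\beta,8m_{1}^{2})=\gcd(\alpha,16)=1$, the exponent of this entry in $\mu_{\nu}$ is $8\beta(1+m_{1})+3m_{1}^{2}\alpha$. This reduces modulo $16$ to $3m_{1}^{2}\alpha$ (since $1+m_{1}$ is even), and modulo $m_{1}^{2}$ to $8\beta(1+m_{1})$; each reduced value is a unit in its respective ring, so by the Chinese Remainder Theorem the full exponent is coprime to $16m_{1}^{2}=\nu$, giving a primitive $\nu$-th root of unity.
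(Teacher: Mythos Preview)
Your argument is correct. For part~(i) it tracks the paper's proof: both exhibit an off-diagonal Hadamard-block entry of the form $\pm a^{1+m}$ and check that $1+m$ (and $1+m+m^{2}$) is coprime to $2m^{2}$. Note only the harmless slip that $\epsilon(0,1)=(0-1)^{2}+m(0-1)=1-m$, not $1+m$; the value $1+m$ is $\epsilon(1,0)$, so the entry you display actually sits in the block $(i,j)=(1,0)$, and the coprimality argument is identical.

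For part~(ii) your case split on $m\bmod 4$ is genuinely more careful than the paper's treatment and in fact patches a gap there. The paper asserts that $u^{3}a^{1+m}$ is an entry of $W_{H,u,a}$ by ``putting $v=u^{3}$, $i=1$, $j=\ell=\ell'=0$,'' but these choices are incompatible: with $i-j$ odd the block $V_{1,0}$ is $H^{T}$, whose entries lie in $\{\pm 1\}$, so $v=u^{3}$ is not attainable at that position. The number $u^{3}a^{1+m}$ is indeed a primitive $\nu$-th root of unity, but it is not an entry of the matrix. Your fix is exactly the right one: when $4\mid m$ the Hadamard-block entry $\pm a^{1+m}$ already has order $\nu=2m^{2}$, whereas when $m\equiv 2\pmod 4$ one must move to a Potts block (with $i-j$ even) to pick up the factor $u^{3}$. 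Your choice of the diagonal entry $a^{4+2m}u^{3}$ from the $(2,0)$ block, together with the CRT computation showing its exponent is a unit modulo $16$ and modulo $m_{1}^{2}$, cleanly produces the required primitive $\nu$-th root of unity.
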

\begin{proof}
%From 
%$(\ref{eq:5})$, $(\ref{eq:6})$, and 
%Theorem~\ref{thm:main1},
%for $\alpha=(i,\ell,x), \beta=(j,\ell',y) \in X$
%the $(\alpha, \beta)$-entry of $W_{H,u,a}$, $W'_{H,u,b}$ ($i,j\in\ZZ_m, \ell,\ell'\in\ZZ_m$)
%are given by
Firstly, suppose $r>4$.
From Lemma~\ref{lemma:5.2}, the entries of $W_{H,u,a}$, $W'_{H,u,b}$ with absolute value $1$ are
\begin{eqnarray}
\pm a^{2m(\ell-\ell')(i-j)+\epsilon(i,j)}  & &  (i-j: \text{odd}), \label{eq:44} \\
\pm \eta^{(\ell-\ell')(i-j)} b^{\delta(i,j)} & &  (i-j: \text{odd}),  \nonumber
\end{eqnarray}
which are $2m^2$-th roots of unity, $m^2$-th roots of unity, respectively.
Putting $i=1$, $j=\ell=\ell'=0$ in (\ref{eq:44}),
we obtain $a^{1+m}$ which is a primitive $2m^2$-th root of unity.

%the coefficients of $H$ is $a^{ m(2(\ell-\ell')+1)+1}$.
%Since $m(2(\ell-\ell')+1)+1$ is odd and relatively prime to $m$,
%we get $( m(-2(\ell-\ell')+1)+1,2m^2 )=1$.
%Hence, $W_{1,0}$ has 

Next, suppose $r \leq 4$.
%From Lemma~\ref{lemma:5.2} all the entries of $W_{H,u,a}$, $W'_{H,u,b}$ are roots of unity.
%From (\ref{potts})
Then the entries of $W_{H,u,a}$, $W'_{H,u,b}$ are given by 
\begin{eqnarray}
v a^{2m(\ell-\ell')(i-j)+\epsilon(i,j)}  & &  (v \in \{u^3, -u^{-1}, \pm 1\}), \label{eq:0916-1} \\
v \eta^{(\ell-\ell')(i-j)} b^{\delta(i,j)} & & (v \in \{u^3, -u^{-1}, \pm 1\}), \label{eq:0916-2}
\end{eqnarray}
respectively,
all of which are roots of unity.

If $r=4$ or $1$, then from (\ref{potts}), $u^4=1$.
From (\ref{eq:0916-1}), (\ref{eq:0916-2}),
the entries of $W_{H,u,a}$, $W'_{H,u,b}$ are 
$2m^2$-th roots of unity, $m^2$-th roots of unity, respectively.
Putting $i=1$, $j=\ell=\ell'=0$ in (\ref{eq:0916-1}),
we obtain $a^{1+m}$ which is a primitive $2m^2$-th root of unity.

Finally, suppose $r=2$.
Since $u$ is a primitive $16$-root of unity by (\ref{potts}),
the expressions in (\ref{eq:0916-1}), (\ref{eq:0916-2}) are 
$\nu$-th roots of unity,
an $\nu'$-th roots of unity, respectively.
% 2010-09-17
%Let $r=2$ and $m=2k$.
%Then from (\ref{potts}) the order of $u$ is a primitive $16$-root of unity.
%From (\ref{eq:0916-1}), (\ref{eq:0916-2}), the order of entries of $W_{H,u,a}$, $W'_{H,u,b}$
%is a divisor of $\LCM(2m^2,16)$, $\LCM(m^2,16)$, respectively.
%If $k$ is even, then we put $k=2k'$ for some positive integer $k'$.
%Then $\LCM(2m^2,16)=\LCM(32k'^2,16)=32k'^2=2m^2$, 
%$\LCM(m^2,16)=\LCM(16k'^2,16)=16k'^2=m^2$.
%If $k$ is odd, then $\LCM(2m^2,16)=\LCM(8k^2,16)=16k^2=4m^2$, 
%$\LCM(m^2,16)=\LCM(4k^2,16)=16k^2=4m^2$.
Putting $v=u^3$, $i=1$, $j=\ell=\ell'=0$ in (\ref{eq:0916-1}),
we obtain $u^3a^{1+m}$ which is a primitive $\nu$-th root of unity.
\end{proof}

For $S \in \matX$,
we denote by $\mu(S)$ the least common multiple of the orders of the entries of $S$
which have a finite order.
If none of the entries of $S$ has a finite order, 
then we define $\mu(S)=\infty$.
For a nonzero complex number $\zeta$,
we denote by the same symbol $\mu(\zeta)$ the order of $\zeta$
if $\zeta$ has a finite order.

\begin{lemma}  \label{lemma:6000}
Suppose $m\equiv 0 \pmod 4$.
% and $r \leq 4$.
Then for $W=W_{H,u,a}$ or $W=W'_{H,u,b}$, we have
$\mu(W) \mid 2m^2$.
\end{lemma}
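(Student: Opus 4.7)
The plan is to reduce everything to Lemma~\ref{lemma:5.5} by case analysis on $r$, using the fact that Hadamard matrices only exist for $r\in\{1,2\}$ or $4\mid r$, so the relevant values are $r=1$, $r=2$, $r=4$, or $r>4$.

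First, for $r>4$ I would argue as follows. By Lemma~\ref{lemma:5.2}, we have $|u|\neq 1$, so any entry of $W_{H,u,a}$ or $W'_{H,u,b}$ whose absolute value is not $1$ is not a root of unity, and hence contributes nothing to $\mu(W)$. The entries of absolute value $1$ are covered by Lemma~\ref{lemma:5.5}(i): they are $2m^2$-th roots of unity in the case of $W_{H,u,a}$, and $m^2$-th roots of unity in the case of $W'_{H,u,b}$. Since $m^2\mid 2m^2$, we obtain $\mu(W)\mid 2m^2$ in either case.

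For $r=1$ or $r=4$, Lemma~\ref{lemma:5.2} again yields $u^4=1$, and by Lemma~\ref{lemma:5.5}(i) \emph{every} entry of $W_{H,u,a}$, respectively $W'_{H,u,b}$, is a $2m^2$-th, respectively $m^2$-th, root of unity, so $\mu(W)\mid 2m^2$ directly.

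The only case requiring the hypothesis $m\equiv 0\pmod 4$ is $r=2$. Here Lemma~\ref{lemma:5.5}(ii) tells us that the entries are $\nu$-th (resp.\ $\nu'$-th) roots of unity, where $\nu=\LCM(2m^2,16)$ and $\nu'=\LCM(m^2,16)$. The key numerical observation, which is the crux of the argument, is that $4\mid m$ forces $16\mid m^2$, and therefore $\nu'=m^2$ and $\nu=2m^2$. Thus all finite-order entries again have order dividing $2m^2$. This step is essentially the only place where the parity hypothesis on $m$ is actually used, and the rest of the argument is a bookkeeping aggregation of the previous lemmas.
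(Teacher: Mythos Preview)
Your proof is correct and follows exactly the route the paper intends: the paper's own proof is the single line ``Immediate from Lemma~\ref{lemma:5.5}'', and you have simply unpacked the case analysis on $r$ that makes it immediate, correctly isolating the $r=2$ case as the only one where $m\equiv 0\pmod 4$ is needed (to force $16\mid m^2$ so that $\LCM(2m^2,16)=2m^2$ and $\LCM(m^2,16)=m^2$).
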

\begin{proof}
Immediate from Lemma~\ref{lemma:5.5}.
\end{proof}

In Table~\ref{tab:1}, we summarize the properties of
$W=W_{H,u,a}$, $W'_{H,u,b}$ obtained 
%For $W=W_{H,u,a}$ or $W=W'_{H,u,b}$ given in Theorem~\ref{thm:main1},
%we obtain Table~\ref{tab:1}
from Lemmas~\ref{lemma:5.2}, \ref{lemma:5.4}, and 
\ref{lemma:6000}.

\begin{table}
\begin{tabular}{|c|c|c|l|l|c|}
\hline
$W$ & index & size & $r$ & $\mu(W)$ & $E(W)$ \\
\hline
\hline
$W_{H,u,a}$ & $m$    & $m^2r$ & $r=1$  & $2m^2$              & $\{1\}$ \\
            &        &        & $r=2$  & 
%$\begin{cases} 2m^2 &\text{if $m/2$: even}, \\ 4m^2 
%&\text{if $m/2$: odd} \end{cases}$     & $\{1\}$ \\
$\mu(W)|\LCM(2m^2,16)$ & $\{1\}$ \\
            &        &        & $r=4$  & $2m^2$              & $\{1\}$ \\
            &        &        & $r>4$  & $2m^2$              & $\{1,|u|^{-4},|u|^{-3}\}$ \\
\hline
%$\begin{cases} 2m^2 & (m\not=2), \\ 16 & (m=2) \end{cases}$
$W'_{H,u,b}$ & $1$    & $m^2r$ & $r=1$ & $\mu(W)|m^2$              & $\{1\}$ \\
             &        &        & $r=2$ & 
%$\begin{cases} \mu(W)|m^2 &\text{if $m/2$: even}, \\ \mu(W)|4m^2 
%&\text{if $m/2$: odd} \end{cases}$     & $\{1\}$ \\
$\mu(W)|\LCM(m^2,16)$ & $\{1\}$ \\
             &        &        & $r=4$ & $\mu(W)|m^2$              & $\{1\}$ \\
             &        &        & $r>4$ & $\mu(W)|m^2$              & $\{1,|u|^{-4},|u|^{-3}\}$ \\
\hline
\end{tabular}
\caption{Summary of Properties}
\label{tab:1}
\end{table}

% 2010-07-05 ˜?????
%From Table~\ref{tab:1} we obtain the following:
%\begin{lemma}  \label{lemma:6001}
%Let $W=W_{H,u,a}$ or $W=W'_{H,u,b}$ with $r\leq 4$.
%Then all of the entries of $W$ are roots of unity.
%\end{lemma}
%\begin{proof}
%Immediate from the definition of $W_{H,u,a}$, $W'_{H,u,b}$ and Lemma~\ref{lemma:5.2}.
%\end{proof}

For $W \in \matX$ and for a permutation $\sigma$ of $X$,
we define $W^{\sigma}$ by
$W^{\sigma}(\alpha,\beta)=W(\sigma(\alpha),\sigma(\beta))$ for $\alpha,\beta \in X$.
Observe that if $W$ is a spin model,
then $W^{\sigma}$ is also a spin model.
If $W$ is a spin model, then from (\ref{type2}), (\ref{type3}), $-W$ and $\pm\sqrt{-1}W$ are also spin models.
Two spin models $W_1$, $W_2$ are said to be {\em equivalent} if
$cW_1^{\sigma}=W_2$ for some permutation $\sigma$ of $X$ and a complex number $c$ with $c^4=1$.

Two Hadamard matrices are said to be {\em equivalent \/}
if one can be obtained from the other by negating rows 
and columns, or and permuting rows and columns.

%%%%% Lemma 5.01

\begin{lemma}  \label{lemma:5.01}
Let $H_1$, $H_2 \in \matY$ be equivalent Hadamard matrices.
Then $W_{H_1,u,a}$ is equivalent to $W_{H_2,u,a}$,
and $W'_{H_1,u,b}$ is equivalent to $W'_{H_2,u,b}$.
\end{lemma}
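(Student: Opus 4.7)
The plan is to observe that Hadamard-matrix equivalence is generated by four elementary operations---permutation of two rows, permutation of two columns, negation of a single row, negation of a single column---and that spin-model equivalence is itself an equivalence relation. Thus it suffices, for each elementary operation sending $H_1$ to $H_2$, to exhibit a permutation $\sigma$ of $X$ such that $W_{H_1,u,a}^{\sigma}=W_{H_2,u,a}$ (in particular with $c=1$), and analogously for $W'$. Throughout the verification I would repeatedly use that $A_u=u^{3}I-u^{-1}(J-I)$ is invariant under simultaneous conjugation by any permutation matrix on $Y$, because it depends on the indices only through the diagonal indicator.

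Permutations of $H$ are handled by permuting the third coordinate. For a permutation $\pi$ of $Y$ with $H_2(x,y)=H_1(\pi(x),y)$, I would set $\sigma(i,\ell,x)=(i,\ell,\pi(x))$ when $i$ is even and $\sigma(i,\ell,x)=(i,\ell,x)$ when $i$ is odd; splitting on the parity of $(i,j)$ then realizes the row permutation directly in the even-odd block and, via $V_{ij}=H^{T}$, in the odd-even block, while the even-even and odd-odd blocks are unchanged because $A_u$ is permutation-invariant. Column permutations of $H$ are handled by the symmetric choice, acting on the third coordinate when $i$ is odd. The substantive case is negation of a row of $H$, and here the main arithmetic observation is $a^{m^{2}}=-1$ (since $a$ has order $2m^{2}$). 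Consequently, shifting the second coordinate by $m/2$ multiplies an entry by $a^{m^{2}(i-j)}=(-1)^{i-j}$, which is $-1$ on the $H$- and $H^{T}$-blocks (where $i-j$ is odd) and $+1$ on the $A_u$-blocks (where $i-j$ is even). To absorb the negation of row $x_{0}$ of $H$, I would define $\sigma(i,\ell,x)=(i,\ell+\tfrac{m}{2},x)$ when $i$ is even and $x=x_{0}$ and $\sigma$ trivial otherwise; the four parity cases then contribute exactly the signs $(-1)^{\delta_{x,x_{0}}}$ on the $H$-block (from the shift on the $\alpha$-side), $(-1)^{\delta_{y,x_{0}}}$ on the $H^{T}$-block (from the shift on the $\beta$-side), and no net change in the $A_u$-blocks, where the two $m/2$-shifts cancel because $i-j$ is even. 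Negation of a column is handled by the mirror choice, shifting on odd $i$ instead.

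For the symmetric spin model $W'_{H,u,b}$ the same four constructions work verbatim with $a^{2m}$ replaced by $\eta$, using the parallel identity $\eta^{m/2}=-1$ (valid because $\eta$ has order $m$ and $m$ is even); the factor $b^{\delta(i,j)}$ depends only on $i,j$ and is unaffected by any of the permutations above. The main obstacle is therefore not a conceptual one but the bookkeeping across the four parity cases for each of the four elementary operations; the single nontrivial insight is that the $\ell$-phase $a^{2m(\ell-\ell')(i-j)}$ (respectively $\eta^{(\ell-\ell')(i-j)}$) is exactly the mechanism that converts sign changes on entries of $H$ into honest permutations of the larger index set $X$.
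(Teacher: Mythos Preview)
Your proposal is correct and follows essentially the same route as the paper's proof: reduce to the four elementary Hadamard-equivalence moves, handle row/column permutations of $H$ by permuting the third coordinate on the appropriate parity of $i$, and handle row/column negations by shifting the second coordinate by $m/2$ on the appropriate parity, using $a^{m^{2}}=-1$ (equivalently $\eta^{m/2}=-1$) so that the $\ell$-phase supplies exactly the required sign on the $H$- and $H^{T}$-blocks while leaving the $A_u$-blocks unchanged. The paper carries out the same four constructions with the same permutations (written as $\sigma,\sigma',\rho,\rho'$) and the same parity bookkeeping, just with the explicit case-by-case verification spelled out.
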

\begin{proof}
Let $(W_1, W_2, c, g)=(W_{H_1,u,a}, W_{H_2,u,a}, a, \epsilon)$ or $(W'_{H_1,u,b}, W'_{H_2,u,b}, b, \delta)$.

If $H_2$ is obtained by a permutation of columns of $H_1$,
then there exists a permutation $\pi$ of $Y$ such that $H_2(x,\pi(y))=H_1(x,y)$
% or $H_2(x,y)=H_1(\pi(x),y)$
for all $x,y \in Y$.
We define a permutation $\sigma$ of $X$ by
\begin{align*}
  \sigma((i,\ell,x))&=
  \begin{cases}
    (i,\ell,\pi(x)) & \text{if $i$ is odd,} \\
    (i,\ell,x) & \text{otherwise.}
  \end{cases}
\end{align*}
Then for $\alpha=(i,\ell,x), \beta=(j,\ell',y)\in X$,
\begin{align*}
W_2^{\sigma}(\alpha,\beta)
&= W_2({\sigma}(\alpha),{\sigma}(\beta))
%\\ \nexteq
%\begin{cases}
%  c^{g(i,j)} S_{ij}(\ell,\ell') V_{ij}(x,y)
%    &\text{if $i\equiv j\equiv0\pmod{2}$,}\\
%  c^{g(i,j)} S_{ij}(\ell,\ell') V_{ij}(x,\pi(y))
%    &\text{if $i\equiv j+1\equiv0\pmod{2}$,}\\
%  c^{g(i,j)} S_{ij}(\ell,\ell') V_{ij}(\pi(x),y)
%    &\text{if $i+1\equiv j\equiv0\pmod{2}$,}\\
%  c^{g(i,j)} S_{ij}(\ell,\ell') V_{ij}(\pi(x),\pi(y))
%    &\text{if $i\equiv j\equiv1\pmod{2}$,}
%\end{cases}
\nexteq
\begin{cases}
  c^{g(i,j)} S_{ij}(\ell,\ell') A_u(x,y)
    &\text{if $i\equiv j\equiv0\pmod{2}$,}\\
  c^{g(i,j)} S_{ij}(\ell,\ell') H_2(x,\pi(y))
    &\text{if $i\equiv j+1\equiv0\pmod{2}$,}\\
  c^{g(i,j)} S_{ij}(\ell,\ell') H_2^T(\pi(x),y)
    &\text{if $i+1\equiv j\equiv0\pmod{2}$,}\\
  c^{g(i,j)} S_{ij}(\ell,\ell') A_u(\pi(x),\pi(y))
    &\text{if $i\equiv j\equiv1\pmod{2}$}
\end{cases}
\nexteq
\begin{cases}
  c^{g(i,j)} S_{ij}(\ell,\ell') A_u(x,y)
    &\text{if $i\equiv j\equiv0\pmod{2}$,}\\
  c^{g(i,j)} S_{ij}(\ell,\ell') H_1(x,y)
    &\text{if $i\equiv j+1\equiv0\pmod{2}$,}\\
  c^{g(i,j)} S_{ij}(\ell,\ell') H_1^T(x,y)
    &\text{if $i+1\equiv j\equiv0\pmod{2}$,}\\
  c^{g(i,j)} S_{ij}(\ell,\ell') A_u(x,y)
    &\text{if $i\equiv j\equiv1\pmod{2}$}
\end{cases}
\nexteq
W_1(\alpha,\beta).
\end{align*}

If $H_2$ is obtained by a permutation of rows of $H_1$,
then there exists a permutation $\pi'$ of $Y$ such that $H_2(\pi'(x),y)=H_1(x,y)$
% or $H_2(x,y)=H_1(\pi(x),y)$
for all $x,y \in Y$.
We define a permutation $\sigma'$ of $X$ by
\begin{align*}
  \sigma'((i,\ell,x))&=
  \begin{cases}
    (i,\ell,\pi'(x)) & \text{if $i$ is even,} \\
    (i,\ell,x) & \text{otherwise.}
  \end{cases}
\end{align*}
Similar calculation shows $W_2^{\sigma'}(\alpha, \beta)=
W_1(\alpha, \beta)$.
%hence $W_1$ is equivalent to $W_2$ using $\sigma'$.

If $H_2$ is obtained by negating a column $y_1$ of $H_1$,
then $H_2(x,y_1)=-H_1(x,y_1)$, $H_2(x,y)=H_1(x,y)$ 
for all $x\in Y$ and $y\in Y-\{y_1\}$.
We define a permutation $\rho$ of $X$ by
\[
  \rho((i,\ell,x))=
  \begin{cases}
    (i,\ell+\delta_{x,y_1}\frac{m}{2},x) & \text{if $i$ is odd,} \\
    (i,\ell,x) & \text{otherwise}.
  \end{cases}
\]
Note that $S_{ij}(\ell,\ell')=(-1)^{i-j} S_{ij}(\ell+\frac{m}{2},\ell')
=(-1)^{i-j} S_{ij}(\ell,\ell'+\frac{m}{2})$.
%Note that $S_{ij}(\ell,\ell')=-S_{ij}(\ell+\frac{m}{2},\ell')
%=-S_{ij}(\ell,\ell'+\frac{m}{2})$ if $i\equiv j+1\pmod{2}$.
Thus for $\alpha=(i,\ell,x), \beta=(j,\ell',y)\in X$,
\begin{align*}
& W_2^{\rho}(\alpha,\beta) 
\\ &=
W_2({\rho}(\alpha),{\rho}(\beta))
\nexteq
\begin{cases}
  c^{g(i,j)} S_{ij}(\ell,\ell') A_u(x,y) 
    & \text{if $i\equiv j\equiv0\pmod{2}$,} \\
  c^{g(i,j)} S_{ij}(\ell,\ell'+\delta_{y,y_1}\frac{m}{2}) H_2(x,y) 
    & \text{if $i\equiv j+1\equiv0\pmod{2}$,} \\
  c^{g(i,j)} S_{ij}(\ell+\delta_{x,y_1}\frac{m}{2},\ell') H_2^T(x,y) 
    & \text{if $i+1\equiv j\equiv0\pmod{2}$,} \\
  c^{g(i,j)} S_{ij}(\ell+\delta_{x,y_1}\frac{m}{2},
  \ell'+\delta_{y,y_1}\frac{m}{2}) A_u(x,y) 
    & \text{if $i\equiv j\equiv1\pmod{2}$} 
\end{cases}
\nexteq
\begin{cases}
  c^{g(i,j)} S_{ij}(\ell,\ell') A_u(x,y) & \text{if $i\equiv j\equiv0\pmod{2}$,} \\
 (-1)^{\delta_{y,y_1}}
  c^{g(i,j)} S_{ij}(\ell,\ell') H_2(x,y) 
   & \text{if $i\equiv j+1\equiv0\pmod{2}$,} \\
 (-1)^{\delta_{x,y_1}}
 c^{g(i,j)} S_{ij}(\ell,\ell') H_2^T(x,y) 
   & \text{if $i+1\equiv j\equiv0\pmod{2}$,} \\
 c^{g(i,j)} S_{ij}(\ell,\ell') A_u(x,y) 
   & \text{if $i\equiv j\equiv1\pmod{2}$} 
\end{cases}
\nexteq
\begin{cases}
  c^{g(i,j)} S_{ij}(\ell,\ell') A_u(x,y) & \text{if $i\equiv j\equiv0\pmod{2}$,} \\
  c^{g(i,j)} S_{ij}(\ell,\ell') H_1(x,y) 
    & \text{if $i\equiv j+1\equiv0\pmod{2}$,} \\
  c^{g(i,j)} S_{ij}(\ell,\ell') H_1^T(x,y) 
    & \text{if $i+1\equiv j\equiv0\pmod{2}$,} \\
  c^{g(i,j)} S_{ij}(\ell,\ell') A_u(x,y) 
    & \text{if $i\equiv j\equiv1\pmod{2}$} 
\end{cases}
\nexteq
W_1(\alpha,\beta).
\end{align*}

If $H_2$ is obtained by negating a row $x_1$ of $H_1$,
then $H_2(x_1,y)=-H_1(x_1,y)$, $H_2(x,y)=H_1(x,y)$ 
for all $x\in Y-\{x_1\}$ and $y \in Y$.
We define a permutation $\rho'$ of $X$ by
\[
  \rho'((i,\ell,x))=
  \begin{cases}
    (i,\ell+\delta_{x,x_1}\frac{m}{2},x) & \text{if $i$ is even,} \\
    (i,\ell,x) & \text{otherwise}.
  \end{cases}
\]
Similar calculation shows $W_2^{\rho'}(\alpha, \beta)=
W_1(\alpha, \beta)$.
%Now we can see that $W_2^{\psi}=W_1$ for $\psi\in\{\sigma, \rho\}$,
%so that $W_2$ is equivalent to $W_1$.
\end{proof}

%%%%%%%%%%%%%%%
%Decomposability
%%%%%%%%%%%%%%%

\section{Decomposability}\label{sec:5}

%%%%% Lemma 5.3

\begin{lemma} \label{lemma:5.3}
Let $S_1, S_2$ be finite subsets of positive real numbers.
Suppose $1\in S_1 \cap S_2$ and $|S_1S_2|=3$. Then
\[
(|S_1|,|S_2|) \in \{(2,2),(1,3),(3,1)\}.
\]
If $|S_1|=|S_2|=2$, then $S_1S_2=\{1,a,a^2\}$ or $\{1,a,a^{-1}\}$ 
for some positive real number $a\neq1$.
\end{lemma}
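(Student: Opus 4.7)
The plan is to exploit the hypothesis $1 \in S_1 \cap S_2$ to get the inclusions $S_1 \subseteq S_1 S_2$ and $S_2 \subseteq S_1 S_2$, from which $|S_1| \leq 3$ and $|S_2| \leq 3$. If $|S_1|=1$, then $S_1=\{1\}$ and $S_1S_2=S_2$, forcing $|S_2|=3$; the case $|S_2|=1$ is symmetric. This already accounts for the pairs $(1,3)$ and $(3,1)$.

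Next I would rule out any configuration in which one of the sets has three elements and the other has at least two. Suppose for contradiction that $|S_1|=3$ and $|S_2|\geq 2$. Since $S_1 \subseteq S_1 S_2$ and both sets have cardinality $3$, we obtain $S_1 = S_1 S_2$. Then for each $s_2 \in S_2$, multiplication by $s_2$ sends $S_1$ injectively into $S_1S_2=S_1$, and hence bijectively onto $S_1$. Taking the product of all three elements on either side of the equation $s_2 S_1 = S_1$ yields $s_2^3 = 1$, and since $s_2 > 0$ this forces $s_2 = 1$. Thus $S_2 = \{1\}$, contradicting $|S_2| \geq 2$. A symmetric argument rules out $|S_2|=3$ with $|S_1|\geq 2$, so the only remaining possibility is $(|S_1|,|S_2|)=(2,2)$, establishing the first assertion.

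For the second assertion, I would write $S_1=\{1,a_1\}$ and $S_2=\{1,a_2\}$ with $a_1,a_2\in\RR_{>0}\setminus\{1\}$, so that $S_1S_2 \subseteq \{1,a_1,a_2,a_1a_2\}$. The condition $|S_1S_2|=3$ amounts to exactly one coincidence among these four expressions; the coincidences $a_1a_2=a_1$ and $a_1a_2=a_2$ would force $a_2=1$ or $a_1=1$ respectively, which are excluded. The only remaining possibilities are $a_1=a_2$, which yields $S_1S_2=\{1,a,a^2\}$ with $a=a_1$, and $a_1a_2=1$, which yields $S_1S_2=\{1,a_1,a_1^{-1}\}$; in each subcase, positivity and $a_1 \neq 1$ guarantee the three listed values are distinct. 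No step here presents a serious obstacle; the only place where the positivity of the reals is genuinely used is the deduction $s_2^3 = 1 \Rightarrow s_2 = 1$ in the second paragraph, and it is precisely this point that rules out all pairings other than $(1,3)$, $(3,1)$, and $(2,2)$.
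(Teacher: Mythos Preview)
Your proof is correct and follows the same overall outline as the paper's: bound $|S_i|\leq 3$ via $S_i\subseteq S_1S_2$, dispose of the cases $|S_i|=1$ trivially, rule out $(|S_1|,|S_2|)=(3,\geq 2)$ (and symmetrically) by contradiction, and then analyse the $(2,2)$ case directly.

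The one genuine difference is how you derive the contradiction when $|S_1|=3$ and $|S_2|\geq 2$. The paper argues concretely: from $S_2\subseteq S_1=S_1S_2$ it writes $S_1=\{1,\lambda,\mu\}$, takes $\lambda\in S_2$, observes $\lambda^2\in S_1S_2=S_1$ forces $\mu=\lambda^2$, and then checks that $S_1S_2=\{1,\lambda,\lambda^2,\lambda^3\}$ has four elements. You instead note that each $s_2\in S_2$ induces a bijection $S_1\to S_1$, and comparing the product of all elements on both sides gives $s_2^3=1$, whence $s_2=1$ by positivity. Your argument is a bit slicker: it avoids the ``without loss of generality $S_2=\{1,\lambda\}$'' reduction and the explicit identification of $\mu$, and it makes transparent exactly where positivity enters (the implication $s_2^3=1\Rightarrow s_2=1$). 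The paper's argument, on the other hand, is more hands-on and yields the explicit structure $S_1=\{1,\lambda,\lambda^2\}$ along the way. Both routes are short; yours generalises more readily (the same product trick would give $s_2^{|S_1|}=1$ for any finite $S_1$).
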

\begin{proof}
By way of contradiction, 
we prove that if $|S_1|\geq 3$ and $|S_2|\geq 2$
then $|S_1S_2| > 3$.
Since $S_1 \cup S_2 \subset S_1S_2$, we obtain $S_2 \subset S_1=S_1S_2$.
Let 
$S_1=\{1,\lambda,\mu\} \ (\lambda,\mu\not=1, \lambda\not=\mu)$.
Then we may put $S_2=\{1,\lambda\}$ without loss of generality.
Then we have $\lambda^2 \in S_1S_2=S_1$, so $\mu=\lambda^2$
%$S_1S_2=\{1,\lambda,\mu,\lambda',\lambda\lambda',\lambda'\mu\}$.
%Since $|S_1S_2|= 3$, 
%$\lambda\lambda'=1$ or $\lambda\lambda'=\mu$ holds.
%
%If $\lambda\lambda'=1$, then
%$S_1S_2=\{1,\lambda,\mu,\lambda^{-1},\mu\lambda^{-1} \}$.
%Since $\lambda>0$ and $\lambda\not= 1$,
%we get $\lambda^{-1}=\mu$.
%Then
%$S_1S_2=\{1,\lambda,\lambda^{-1},\lambda^{-2} \}$.
%This contradicts $|S_1S_2|=3$.
%If $\lambda\lambda'=\mu$, then
%$S_1S_2=\{1,\lambda,\mu,\lambda^{-1}\mu,\lambda^{-1}\mu^2 \}$.
%Then from $\lambda\not=1$ and $\lambda\not=\mu$, 
%we get $\lambda^{-1}\mu=\lambda$.
%Then 
and $S_1S_2=\{1,\lambda,\lambda^2,\lambda^3 \}$.
This implies $|S_1S_2|=4$, a contradiction.

Suppose $|S_1|=|S_2|=2$. Then $S_1=\{1,a\}$, $S_2=\{1,b\}$ 
for some $a,b\neq1$.
Then $|S_1S_2|=3$ implies $a=b$ or $a=b^{-1}$.
\end{proof}

%%%%% Lemma 5.10

\begin{lemma} \label{lemma:5.10}
Let $A \in \textsl{Mat}_{Z_1}(\CC^*)$ be a matrix all of whose entries are roots of unity. 
Let $B \in \textsl{Mat}_{Z_2}(\CC^*)$ be a matrix which satisfies
$\mu(B)<\infty$.
Then $\mu(A \otimes B)$ is a divisor of 
$\LCM(\mu(A),\mu(B))$.
\end{lemma}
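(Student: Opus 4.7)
The plan is to unpack the definition of $\mu$ and examine the entries of $A\otimes B$ one at a time. Every entry of $A\otimes B$ has the form $ab$, where $a$ is an entry of $A$ (hence a root of unity, by hypothesis) and $b$ is an entry of $B$. Since $\mu$ only records orders of entries of finite order, it suffices to show that whenever $ab$ has finite order, $\operatorname{ord}(ab)$ divides $\operatorname{LCM}(\mu(A),\mu(B))$.

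The first step is the observation that, if $ab$ has finite order and $a$ is a root of unity, then $b=(ab)\cdot a^{-1}$ is a product of two roots of unity and therefore $b$ itself is a root of unity. So every finite-order entry $ab$ of $A\otimes B$ arises from a pair $(a,b)$ in which both $a$ and $b$ are roots of unity. In particular $b$ contributes to $\mu(B)$, so $\operatorname{ord}(b)$ divides $\mu(B)$; and $\operatorname{ord}(a)$ divides $\mu(A)$ by assumption.

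The second step is the elementary fact that for two roots of unity $a,b$, the order of $ab$ divides $\operatorname{LCM}(\operatorname{ord}(a),\operatorname{ord}(b))$: raising $ab$ to the $\operatorname{LCM}$ gives $1\cdot1=1$. Combining this with the first step,
\[
\operatorname{ord}(ab)\mid \operatorname{LCM}(\operatorname{ord}(a),\operatorname{ord}(b))\mid \operatorname{LCM}(\mu(A),\mu(B)).
\]
Taking the least common multiple over all finite-order entries $ab$ of $A\otimes B$ yields $\mu(A\otimes B)\mid \operatorname{LCM}(\mu(A),\mu(B))$, as required. (If no entry of $A\otimes B$ has finite order then $\mu(A\otimes B)=\infty$; but the first step shows this does not happen as soon as $B$ has at least one root-of-unity entry, which is ensured by $\mu(B)<\infty$.)

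There is no real obstacle here; the only subtle point is the asymmetry in the hypotheses (all entries of $A$ are roots of unity, but only some entries of $B$ need be), and this is handled by the quotient trick $b=(ab)a^{-1}$ which forces the $B$-side to also be a root of unity whenever the tensor entry $ab$ contributes to $\mu(A\otimes B)$.
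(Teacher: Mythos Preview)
Your proof is correct and follows essentially the same approach as the paper: identify the finite-order entries of $A\otimes B$ as exactly the products $A(x_1,y_1)B(x_2,y_2)$ with $B(x_2,y_2)$ a root of unity, and then note that each such product has order dividing $\LCM(\mu(A),\mu(B))$. The paper's proof is terser, simply writing $\mu(A\otimes B)=\LCM\{o(A(x_1,y_1)B(x_2,y_2)):(x_2,y_2)\in Z_2'\}$ where $Z_2'$ is the set of index pairs for which $B(x_2,y_2)$ has finite order, but your explicit justification via $b=(ab)a^{-1}$ is exactly what underlies that equality.
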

\begin{proof}
Let $Z_2'=\{(x_2,y_2)\in Z_2\times Z_2
\mid o(B(x_2,y_2)) < \infty \}$. Then 
\[
\mu(A \otimes B) = 
\LCM ( \{ o(A(x_1,y_1)B(x_2,y_2)) \mid x_1, y_1 \in Z_1, (x_2, y_2) \in Z_2' \}),
\]
which is a divisor of $\LCM(\mu(A),\mu(B))$.
%Since $o(A(x_1,y_1)B(x_2,y_2))$ is a divisor of 
%$\LCM(\mu(A),\mu(B))$
%for $x_1, y_1 \in Z_1$ and $(x_2, y_2) \in Z_2'$, the result follows.
\end{proof}

%%%%% Examples %%%%%

Some examples of spin models are listed in Section $1$, i.e.,
Potts model, non-symmetric Hadamard models, and Hadamard models.
We remark that non-symmetric Hadamard models and Hadamard models
are special cases of spin models given in Theorem~\ref{thm:main1} (i), (ii), respectively.
In addition to these examples, the following spin models are known.

\paragraph{Spin models on finite abelian groups.}
Bannai-Bannai-Jaeger \cite{BBJ} gives solutions to modular invariance equation
for finite abelian groups, and every solution gives a spin model.
Let $U$ be a finite abelian group, and
$e=\exp(U)$ denote the exponent of $U$.
Let $\{\chi_a\mid a\in U\}$ be the set of characters of $U$
%Let $\chi_i$ be the additive chracter of $U$ such that 
%$P_{ij}=\chi_i(j)$ 
with indices chosen so that $\chi_a(b)=\chi_b(a)$ for all
$a, b \in U$.
Let $U=U_1 \oplus \dots \oplus U_h$ be a
decomposition of $U$ into a direct sum of cyclic groups $U_1, U_2, \dots, U_h$.
For each $i\in\{1,2,\dots,h\}$ let $a_i$ be a generator and $n_i$ be the order of the cyclic group $U_i$.
For each $x\in U$, we define the matrix $A_x \in \textsl{Mat}_U(\CC)$ by
\[
A_x(\alpha,\beta)=\delta_{x,\beta-\alpha} \quad \text{($\alpha,\beta \in U$)}.
\]
For any $x=\sum_{i=1}^h x_ia_i$ $(0\leq x_i < n_i)$, let
\begin{equation}  \label{eq:450}
t_x=t_0 \prod_{i=1}^h \eta_i^{x_i} \chi_{a_i}(a_i)^{\frac{x_i(x_i-1)}{2}} 
\prod_{1\leq \ell<k\leq h} \chi_{a_{\ell}}(a_k)^{x_{\ell} x_k},
\end{equation}
where $\eta_i^{n_i}=\chi_{a_i}(a_i)^{-\frac{n_i(n_i-1)}{2}}$ and
\begin{equation}  \label{eq:451}
t_0^2=D^{-1} \sum_{x \in U} \prod_{j=1}^h \eta_j^{-x_j} \chi_{a_j}(a_j)^{-\frac{x_j(x_j-1)}{2}}
\prod_{1\leq \ell<k\leq h} \chi_{a_{\ell}}(a_k)^{-x_{\ell} x_k},
\end{equation}
where $D^2=|U|$.
Let $\theta_x=t_x/t_0$ for any $x \in U$.
Then, for any $x\in U$, $\theta_x$ is a root of unity and $\theta_x^{2e}=1$.
Especially, we get
\begin{equation}  \label{eq:452}
\theta_x^{2|U|}=1.
\end{equation}
The matrix
\begin{equation}  \label{eq:W}
W=\sum_{x \in U} t_xA_x.
\end{equation}
is a spin model.

\paragraph{Jaeger's Higman-Sims model.}
In \cite{J}, F.~Jaeger constructed a spin model $W_J$ on the Higman-Sims graph
of size $100$. 
We denote by $A$ the adjacency matrix of the Higman-Sims graph.
We put $W_J=-\tau^5 I-\tau A+\tau^{-1} (J-A-I)$, where $\tau$ satisfies $\tau^2+\tau^{-2}=3$.
Then $W_J$ is a symmetric spin model.

Now every known spin model belongs to one of the following five families:
\begin{itemize}
\item[(a)] $A_u$: Potts model of size $r \geq 2$.
If $r=2$, then $\mu(A_u)=16$. If $r=4$, then $\mu(A_u)=2$ or $4$.
If $r=2,4$, then $E(A_u)=\{1\}$.
If $r>4$, then $E(A_u)=\{1, |u|^{-4}\}$, and hence $|E(A_u)|=2$.
\item[(b)] $W_U$: spin model on a finite abelian group $U$.
We have various kinds of indices and $E(W_U)=\{1\}$.
\item[(c)] $W_J$: Jaeger's Higman-Sims model of size $100$.
We have $E(W_J)=\{1,\tau^{-4},\tau^{-6}\}$ with $\tau^2+\tau^{-2}=3$.
and hence $|E(W_J)|=3$.
%(d) $W_{\text{non-symm},H}$: non-symmetric Hadamard model,
%(e) $W_{\text{symm},H}$: Hadamard model,
\item[(d)] $W_{H,u,a}$: spin models given in Theorem~\ref{thm:main1}(i).
\item[(e)] $W'_{H,u,b}$: spin models given in Theorem~\ref{thm:main1}(ii).
\end{itemize}

By way of contradiction, we now give a proof of Theorem~\ref{thm:main2}.
Let $H$ be a Hadamard matrix of order $r>4$.
Let $s$ be a positive integer and $a$ a primitive $2^{2s+1}$-th root of unity.
For the remainder of this section, we denote by $W$ the spin model $W_{H,u,a}$ 
given in Theorem~\ref{thm:main1} (i) of index $2^s$.
By Lemma~\ref{lemma:5.4} we obtain 
\begin{equation}  \label{1004-1}
E(W)=\{1,|u|^{-4}, |u|^{-3}\}.
\end{equation}

We assume that
\begin{equation}  \label{eq:46}
W= W_1 \otimes W_2 \otimes \dots \otimes W_v,
\end{equation}
where each of $W_1, W_2, \dots, W_v$ is a known spin model listed in (a)--(e)
and their sizes are not equal to $1$.
Since $|E(W)|=3$ from (\ref{1004-1}),
%by the assumption in Theorem~\ref{thm:main2},
using Lemma~\ref{lemma:5.3} we may assume without loss of generality
\[
(|E(W_1)|,|E(W_2)|,\dots,|E(W_v)|)=
(1,\dots,1,2,2) \ \text{or} \ 
(1,\dots,1,3).
\]

A known spin model $W'$ with $|E(W')|=1$ belongs to the family
(b) or to the families (a), (d) and (e) with $r\leq4$.
Therefore, (\ref{eq:46}) can be reduced to the following cases:
\begin{eqnarray}
W &=& W_1 \otimes W_2 \otimes W_3 \ \text{with} \ E(W_1)=\{1\}, |E(W_2)|=|E(W_3)|=2, \label{case2} \\
W &=& W_1 \otimes W_2 \ \text{with} \ E(W_1)=\{1\}, |E(W_2)|=3, \label{case1}
\end{eqnarray}
where in (\ref{case2}), (\ref{case1}),
$W_1$ is a tensor product of spin models on finite abelian groups
and spin models in the families (a), (d) and (e) with $r\leq 4$.
Note that $W_1$ could possibly be of size 1 in (\ref{case2}).

First, we treat the case (\ref{case2}).
%The only possibility for $W_2 \otimes W_3$
%is the tensor product of two Potts models from (a).
Then Lemma~\ref{lemma:5.3} implies
$E(W_2 \otimes W_3)=\{1,\beta,\beta^2\}$, or $\{1,\beta,\beta^{-1}\}$
for some $\beta$.
On the other hand, $E(W_2 \otimes W_3)=E(W_1)E(W_2 \otimes W_3)
=E(W)=\{1,|u|^{-4},|u|^{-3}\}$ by (\ref{1004-1}). This is a contradiction.

Next, we treat the case (\ref{case1}).
We have $E(W_2)=E(W_1)E(W_2)=E(W)=\{1,|u|^{-4},|u|^{-3}\}$ from (\ref{1004-1}).
%Then, $W_2$ belongs to one of the families (c), (d) or (e).
Since $\{1,|u|^{-4},|u|^{-3}\}\neq\{1, \tau^{-4}, \tau^{-6}\}$,
$W_2$ cannot be the spin model (c).
Therefore, $W_2$ belongs to the family (d) or (e).
This means $W_2=W_{H',u',a'}$ or $W_2=W'_{H',u',b'}$,
where $H'$ is a Hadamard matrix of order $r'=(u'^2+u'^{-2})^2$.
Since $|E(W_2)|=3$, Lemma~\ref{lemma:5.4} implies $r'>4$
and $E(W_2)=\{1,|u'|^{-4},|u'|^{-3}\}$.
Then we have $|u'|=|u|$, as $E(W)=E(W_2)$.
%From Lemma~\ref{lemma:5.2} we have $|u|\not=1$ by $r>4$,
%and hence $|u'|\not=1$, $r'>4$.
Now the second part of Lemma~\ref{lemma:5.2} implies $u^4>0$ and
$u'^4>0$, hence
\begin{equation}  \label{0921}
u^4=u'^4,
\end{equation}
and further $r=r'$ by (\ref{potts}).
Therefore the size of $W_2$ is $2^{2s'}r$ for some integer $s'$ with $0<s'<s$, and the size of $W_1$ is $2^{2(s-s')}$.
In particular, we obtain $s>1$.

%Then
%\begin{align*}
%r_2 &=(u'^2+u'^{-2})^2 \\
%&=u'^4+2+u'^{-4} \\
%&=|u'|^4+2+|u'|^{-4} \\
%&=|u|^4+2+|u|^{-4} \\
%&=u^4+2+u^{-4} \\
%&=(u^2+u^{-2})^2 \\
%&= r.
%\end{align*}

%From Lemma~\ref{lemma:5.2} we have
%\begin{equation} \label{eq:5000}
%o(uu'^{-1}) \mid 4
%\end{equation}
%and
%\begin{equation} \label{eq:6001}
%r=r_2.
%\end{equation}
%Therefore we may put $m_2=2^{s'}$ with $0<s'<s$.
%The size of $W_1$ is $m^2 {m_2}^{-2}=2^{2(s-s')}$.

Since the tensor product of spin models on finite abelian groups 
is also a spin model on a finite abelian group,
we may suppose that
\begin{equation} \label{eq:6002}
W_1=W_{11} \otimes W_{12} \otimes W_{13},
\end{equation}
where $W_{11}$ is a spin model on a finite abelian group $U$,
$W_{12}$ is a tensor product of spin models in the family (a) with $r\leq4$,
and $W_{13}$ is a tensor product of spin models 
in the families (d) and (e) with $r\leq4$.

We put $|U|=2^{n_1}$. Since the size $2^{n_1}$ of $W_{11}$
cannot exceed that of $W_1$, we have $n_1 \leq 2(s-s')$.
Then the size of $W_{12} \otimes W_{13}$ is $2^{2(s-s')-n_1}$.
The diagonal entry of $W_{11}$ is a complex number $t_0$ given by (\ref{eq:451}).
The diagonal entries of $W_{12}$, $W_{13}$ are $16$-th roots of unity.
We denote by $\kappa_2$, $\kappa_3$ the diagonal entries of $W_{12}$, $W_{13}$, respectively.
Comparing the diagonal entries of (\ref{eq:6002}),
we have $u^3=t_0 \kappa_2\kappa_3 u'^3$, thus
\begin{equation} \label{eq:2010}
W= (t_0^{-1}W_{11}) \otimes (\kappa_2^{-1}W_{12}) \otimes (\kappa_3^{-1}W_{13}) \otimes (u^3 u'^{-3}W_2).
\end{equation}
From (\ref{eq:452}), we have 
\begin{equation} \label{eq:5001}
\mu(t_0^{-1}W_{11}) \mid 2^{n_1+1}.
\end{equation}
From (a), we have
\begin{equation} \label{eq:1008}
\mu(\kappa_2^{-1}W_{12}) \mid 2^4.
\end{equation}
From (a) and Lemma~\ref{lemma:6000}, we have 
\begin{equation} \label{eq:5002}
\mu(\kappa_3^{-1}W_{13}) \mid 2^{2(s-s')-n_1+1}.
\end{equation}
Since $W_2$ is a spin model belonging to the family (d) or (e),
Lemma~\ref{lemma:5.5} and (\ref{0921}) imply
\begin{equation}  \label{eq:1010}
\mu(u^3u'^{-3}W_2) \mid 2^{2s'+1}.
\end{equation}
From (\ref{eq:2010})--(\ref{eq:1010}) and Lemma~\ref{lemma:5.10},
we have 
\[
\mu(W) \mid \LCM (2^{n_1+1}, 2^4, 2^{2(s-s')-n_1+1}, 2^{2s'+1}).
\]
Since $n_1< 2s$, we have $\max(n_1+1, 4, 2(s-s')-n_1+1, 2s'+1) \leq 2s$.
This implies $\mu(W) \mid 2^{2s}$,
which contradicts Lemma~\ref{lemma:5.5} (i).

%From a diagonal entry in (\ref{eq:6002}) we have
%\[
%u^3=t_0 \kappa_2\kappa_3 u'^3.
%\]
%Then from (\ref{eq:6002}) we have
%\begin{equation} \label{eq:2010}
%W= (t_0^{-1}W_{11}) \otimes (\kappa_2^{-1}W_{12}) \otimes (\kappa_3^{-1}W_{13}) \otimes (u^3 u'^{-3}W_2).
%\end{equation}
%From (\ref{eq:452}) we have $t_0^{-1}W_{11}$ is a matrix all of whose entries have a finite order.
%$\kappa_2^{-1}W_{12}$, $\kappa_3^{-1}W_{13}$ are matrices all of whose entries have a finite order.
%From Lemma~\ref{lemma:6000}, $u^3 u'^{-3}W_2$ has entries with finite order.
%From Lemma~\ref{lemma:5.10} and (\ref{eq:2010}) we have
%\begin{equation} \label{eq:5003}
%\mu( W ) \mid \LCM( \mu( t_0^{-1}W_{11}), \mu(\kappa_2^{-1}W_{12}), \mu(\kappa_3^{-1}W_{13}), \mu(u^3 u'^{-3}W_2) ).
%\end{equation}
%From Lemma~\ref{lemma:5.5} and (\ref{0921}) we have the following.
%If $W_2$ is a spin model of case (d), then we have $\mu(u^3 u'^{-3}W_2)=2^{2s'+1}$.
%If $W_2$ is a spin model of case (e), then we have $\mu(u^3 u'^{-3}W_2) \mid 2^{2s'}$.
%Since $2^{2s'+1} \mid 2^{2s-1}$ and $2^{2s'} \mid 2^{2s-2}$,
%from (\ref{eq:5001}), (\ref{eq:5002}), (\ref{eq:1008}) and (\ref{eq:5003})
%we have
%\[
%\mu(W) \mid \LCM( 2^{2s-1}, 2^{2s}, 2^{2s-1} )=2^{2s}.
%\]
%Lemma~\ref{lemma:5.5} implies that
%there exists an entry $\gamma$ 
%in $W$ such that $\gamma$ is a primitive $2^{2s+1}$-root of unity.
%This is contradiction.

%%%%%%%%%%
% Concluding remarks
%%%%%%%%%%

\section{Spin models in Theorem~\ref{thm:main1} with $r\leq 4$}

In this section, we treat the case of $r\leq 4$ in Theorem~\ref{thm:main2}.
We show that if $r=1, 4$ in Theorem~\ref{thm:main2}, then $W_{H,u,a}$ is not new.

%%%%% The case $r=4$
If $r=4$ in Theorem~\ref{thm:main1} (i), then
$W_{H,u,a}$ is a tensor product of
a Hadamard matrix of order $4$ and $W_{(1),u,a}$.
Indeed, up to equivalence, there is a unique Hadamard matrix of order $r=4$.
By Lemma~\ref{lemma:5.01}, we may assume without loss of generality
\[
H=\begin{pmatrix}
1 & -1 & -1 & -1 \\
-1 & 1 & -1 & -1 \\
-1 & -1 & 1 & -1 \\
-1 & -1 & -1 & 1
\end{pmatrix}.
\]
Then $A_u=u^3H$ with $(u^2+u^{-2})^2=4$.
Therefore we have $W_{H,u,a}=H \otimes W_{(1),u,a}$.
Similarly, a spin model $W'_{H,u,b}$ in Theorem~\ref{thm:main1} (ii)
can be decomposed as $H \otimes W'_{(1),u,b}$.

%%%%% Lemma 5.0901

\begin{lemma} \label{lemma:0901}
Let $m\equiv 0 \pmod{4}$.
Let $W_{(1),u,a}$ be a spin model given in Theorem~{\rm\ref{thm:main1}} of index $m$,
where $u^4=1$ and $a$ is a primitive $2m^2$-th root of unity.
Then $W_{(1),u,a}$ is equivalent to $W_{(1),1,au^3}$.
\end{lemma}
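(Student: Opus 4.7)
The plan is to exhibit an explicit permutation $\sigma$ of $X = \ZZ_m \times \ZZ_m$ (the $Y$-coordinate disappears because $Y=\{1\}$) together with a fourth root of unity $c$ such that $cW_{(1),u,a}^\sigma = W_{(1),1,au^3}$. I would begin by comparing the $((i,\ell),(j,\ell'))$-entries of the two matrices directly. Since $r=1$ forces the block $V_{ij}$ of Theorem~\ref{thm:main1} to equal $u^3$ when $i-j$ is even and $1$ when $i-j$ is odd (whereas for $W_{(1),1,au^3}$ the analogous block is always $1$), and since $(au^3)^N = a^N u^{3N}$, the two matrices differ only by powers of $u$. Using $u^4 = 1$ together with $m \equiv 0 \pmod 4$, the contribution $u^{6m(\ell-\ell')(i-j)}$ collapses to $1$, and a brief parity analysis of $\epsilon(i,j)=(i-j)^2+m(i-j)$ modulo $4$ shows $u^{3\epsilon(i,j)}=1$ when $i-j$ is even and $u^{3\epsilon(i,j)}=u^3$ when $i-j$ is odd. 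Combining, I would obtain the clean relation
\[
W_{(1),u,a}((i,\ell),(j,\ell'))=
\begin{cases}
u^3\,W_{(1),1,au^3}((i,\ell),(j,\ell')) & \text{if } i-j\text{ is even},\\
u^{-3}\,W_{(1),1,au^3}((i,\ell),(j,\ell')) & \text{if } i-j\text{ is odd}.
\end{cases}
\]

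With this reduction in hand, I would set $c = u^{-3}$ (a fourth root of unity because $u^4 = 1$) and look for $\sigma$ of the form $\sigma(i,\ell) = (i, \ell + \alpha i)$ with $\alpha \in \ZZ_m$; this is automatically a permutation of $X$. Substituting and using $a^{2m} = \eta$ shows that this shift multiplies $W_{(1),u,a}$ entrywise by $\eta^{\alpha(i-j)^2}$, so the desired identity $cW_{(1),u,a}^\sigma = W_{(1),1,au^3}$ reduces to the pair of congruence conditions $\eta^{\alpha(i-j)^2} = 1$ for $i-j$ even and $\eta^{\alpha(i-j)^2} = u^{6}$ for $i-j$ odd. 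Because $u^4 = 1$ forces $u^6 \in \{\pm 1\}$ and $\eta^{m/2} = -1$, the choice $\alpha = 0$ works if $u^2 = 1$ while $\alpha = m/2$ works if $u^2 = -1$; both cases are verified using only that $(i-j)^2 \equiv 0 \pmod 4$ when $i-j$ is even and $(i-j)^2$ is odd when $i-j$ is odd, each of which rests on $m \equiv 0 \pmod 4$. The only step requiring real care is the bookkeeping of exponents modulo $4$; once it is done correctly, the equivalence falls out at once.
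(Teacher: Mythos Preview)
Your argument is correct. Both you and the paper start by relating the entries of $W_{(1),u,a}$ and $W_{(1),1,au^3}$ through powers of $u$, and then split into the cases $u^2=1$ and $u^2=-1$. The paper, however, asserts that a scalar alone (with the identity permutation) suffices in every case: it records $W_{(1),-1,a}=-W_{(1),1,-a}$ for $u=-1$ and $uW_{(1),1,au^3}=W_{(1),u,a}$ for $u^2=-1$. The first identity is fine, but the second is not quite right as stated: for $i-j$ even the left-hand side has entry $u\,a^{2m(\ell-\ell')(i-j)+\epsilon(i,j)}$ while the right-hand side has entry $u^3\,a^{2m(\ell-\ell')(i-j)+\epsilon(i,j)}$, and these differ precisely when $u^2=-1$. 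Indeed, your own entrywise computation shows the ratio $W_{(1),u,a}/W_{(1),1,au^3}$ equals $u^3$ on even-parity blocks and $u^{-3}$ on odd-parity blocks, so no single scalar can work when $u^2=-1$. Your shift $\sigma(i,\ell)=(i,\ell+\tfrac{m}{2}i)$ is exactly what repairs this, contributing the factor $\eta^{(m/2)(i-j)^2}=(-1)^{i-j}$ that absorbs the sign discrepancy. Thus your route, though it looks more elaborate, is in fact the necessary one, and it patches a small gap in the paper's printed argument.

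One minor quibble: the parity facts about $(i-j)^2$ you invoke at the very end do not depend on $m$. The hypothesis $m\equiv 0\pmod 4$ enters earlier, when you reduce $u^{3\epsilon(i,j)}$ modulo $u^4$---specifically, to kill the contribution $u^{3m(i-j)}$ when $i-j$ is odd.
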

\begin{proof}
First we assume that $u=-1$.
Then $a^{\epsilon(i,j)}(-1)^{i-j-1}=-(-a)^{\epsilon(i,j)}$ holds
for all $i,j \in \ZZ_{m^2}$.
From this, we have $W_{(1),-1,a}=-W_{(1),1,-a}$.
Therefore $W_{(1),-1,a}$ is equivalent to $W_{(1),1,-a}$.

Next we assume that $u^2=-1$.
Since $m\equiv 0 \pmod{4}$, we have
\begin{eqnarray*}
u (au^3)^{\epsilon(i,j)} &=
  \begin{cases}
  a^{\epsilon(i,j)} u & \text{if $i-j$ is even,} \\
  a^{\epsilon(i,j)} & \text{if $i-j$ is odd}.
  \end{cases}
\end{eqnarray*}
From this, we have $uW_{(1), 1, au^3} = W_{(1), u, a}$.
Therefore $W_{(1),u,a}$ is equivalent to $W_{(1),1,au^3}$.
%Therefore $W_{(1),a^{m^2},a}$ is equivalent to $W_{(1),1,c}$ with
%$(c,u)=(a^{m^2+1}, a^{m^2}), (a^{\frac{3m^2}{2}+1}, a^{\frac{m^2}{2}}), (a^{\frac{m^2}{2}+1}, a^{\frac{3m^2}{2}})$.
\end{proof}

%%%%% Lemma 5.7

\begin{lemma} \label{lemma:5.7}
Let $m$ be even, and $\xi$ be a primitive $2m^2$-th root of unity.
Then we have
\begin{equation}  \label{eq:0901}
\sum_{x=0}^{m^2-1} \xi^{-x(x-m)}=m.
\end{equation}
\end{lemma}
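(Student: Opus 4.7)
The plan is to decouple the quadratic exponent by writing each summation index in a mixed-radix form. For $x\in\{0,1,\dots,m^2-1\}$, write $x=mq+r$ with $q,r\in\{0,1,\dots,m-1\}$; this gives a bijection between $\{0,\dots,m-1\}^2$ and $\{0,\dots,m^2-1\}$. A direct expansion yields
\[
x(x-m) = m^2 q(q-1) + mr(2q-1) + r^2.
\]
The key observation is that $q(q-1)$ is always even, so $m^2 q(q-1)$ is a multiple of $2m^2$; since $\xi$ has order $2m^2$, this forces $\xi^{-m^2 q(q-1)}=1$. Consequently
\[
\xi^{-x(x-m)} = \xi^{-r^2+mr}(\xi^{-2mr})^{q},
\]
and the sum factorises as
\[
\sum_{x=0}^{m^2-1}\xi^{-x(x-m)}
 = \sum_{r=0}^{m-1}\xi^{-r^2+mr}\sum_{q=0}^{m-1}(\xi^{-2mr})^{q}.
\]

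Next I would evaluate the inner geometric sum. For $r=0$ every term is $1$, giving inner sum $m$, and the overall $r=0$ contribution is $\xi^{0}\cdot m = m$. For $0<r<m$, I note that $(\xi^{-2mr})^{m} = \xi^{-2m^2 r}=1$, while $\xi^{-2mr}\neq 1$ because $\xi$ is a primitive $2m^2$-th root of unity and $0<2mr<2m^2$. Hence $\xi^{-2mr}$ is a nontrivial $m$-th root of unity, and $\sum_{q=0}^{m-1}(\xi^{-2mr})^{q}=0$. So only the $r=0$ summand survives and the total is $m$.

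There is no real obstacle: the entire argument collapses to a geometric series once the mixed-radix decomposition is in place, with primitivity of $\xi$ ensuring that the $r\neq 0$ inner sums vanish. I note in passing that the hypothesis ``$m$ even'' is not in fact needed for this calculation, but it is harmless (and corresponds to the regime in which the lemma will be applied).
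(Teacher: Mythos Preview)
Your proof is correct, and it takes a genuinely different route from the paper's argument.

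The paper first invokes Galois theory to reduce to the specific root $\xi=\exp(2\pi\sqrt{-1}/(2m^2))$, then completes the square by writing $m=2k$ and $x(x-m)=(x-k)^2-k^2$, doubles the range of summation using $\xi^{-(x-k+m^2)^2}=\xi^{-(x-k)^2}$, and finally appeals to the classical quadratic Gauss sum evaluation $\sum_{x=0}^{2m^2-1}\xi^{-x^2}$ from Nagell's textbook. The hypothesis ``$m$ even'' is used essentially, since the completion of the square needs $m/2\in\ZZ$.

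Your mixed-radix decomposition $x=mq+r$ is more elementary and self-contained: it avoids both the Galois reduction and the external Gauss-sum formula, and it handles every primitive $2m^2$-th root of unity at once. As you correctly observe, your computation never uses the parity of $m$; the lemma in fact holds for all positive integers $m$, which your argument establishes but the paper's does not. The trade-off is that the paper's approach situates the identity within the classical theory of Gauss sums, whereas yours exposes its purely combinatorial character as a geometric-series collapse.
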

\begin{proof}
If (\ref{eq:0901}) holds for $\xi=\exp(2\pi\sqrt{-1}/(2m^2))$,
then by considering the action of the Galois group,
we see that (\ref{eq:0901}) holds for any primitive $2m^2$-th root of unity $\xi$.
Therefore we may assune $\xi=\exp(2\pi\sqrt{-1}/(2m^2))$ without loss of generality.
Since $m$ is even, we may write $m=2k$. Then
\begin{align*}
\sum_{x=0}^{m^2-1} \xi^{-x(x-m)}
&= \sum_{x=0}^{m^2-1} \xi^{-( (x-k)^2-k^2 )} \\
&= \xi^{k^2} \sum_{x=0}^{m^2-1} \xi^{-(x-k)^2} \\
&= \frac{\xi^{k^2}}{2} \sum_{x=0}^{m^2-1} (\xi^{-(x-k)^2}+\xi^{-(x-k+m^2)^2}) \\
%&= \frac{1+\sqrt{-1}}{2\sqrt{2}} ( \sum_{x=0}^{m^2-1} \xi^{-(x-k)^2}+\sum_{x=m^2}^{2m^2-1}\xi^{-(x-k)^2} ) \displaybreak[0] \\
&= \frac{\exp(\pi\sqrt{-1}/4)}{2} \sum_{x=0}^{2m^2-1} \xi^{-(x-k)^2} \\
&= \frac{1+\sqrt{-1}}{2\sqrt{2}} \sum_{x=0}^{2m^2-1} \xi^{-x^2}.
\end{align*}
Now the result follows from \cite[Theorem 99]{Na}.
\end{proof}

%%%%% The case $r=1$
Of particular interest among spin models on finite abelian groups
are spin models on finite cyclic groups.
The spin model defined below is a special case of spin models on 
finite cyclic groups constructed by \cite{BB0}.
Let $m$ be even, and $a$ be a primitive $2{m^2}$-th root of unity.
We restrict (\ref{eq:450}) and (\ref{eq:451}) to $\ZZ_{m^2}$, 
that is, $h=1$. 
In (\ref{eq:450}) and (\ref{eq:451}), 
we put $\eta_1=a^{-m+1}$, $\chi_{a_1}(a_1)=a^2$.
Then (\ref{eq:450}) and (\ref{eq:451}) become
\begin{eqnarray}
t_x &=& t_0 a^{x(x-m)} \quad (x \in \ZZ_{m^2}), \label{eq:453} \\
t_0^2 &=& m^{-1} \sum_{x=0}^{m^2-1} a^{-x(x-m)}=1, \label{eq:454}
\end{eqnarray}
respectively, where we used
Lemma~\ref{lemma:5.7} in (\ref{eq:454}).
Thus we may take $t_0=1$.
Then the matrix $W$ given in (\ref{eq:W}) has entries
%We define a matrix $W \in \textsl{Mat}_{\ZZ_{m^2}}(\CC^*)$ by
\begin{equation} \label{eq:1008-hakata}
W(\alpha, \beta)=a^{ (\beta-\alpha)(\beta-\alpha-m) } 
\quad \text{($\alpha,\beta \in \ZZ_{m^2}$)}.
\end{equation}
%Then $W$ is a spin model on $\ZZ_{m^2}$.
We note that this spin model $W$ on $\ZZ_{m^2}$
was constructed originally in \cite[Theorem 2]{BB}.

%%%%% Lemma 20100108
%
%\begin{lemma} \label{lemma:1008}
%Let $m$ be even.
%Let $W_{\xi,m^2-m/2}$ be a spin model on $\ZZ_{m^2}$ given by {\rm(\ref{eq:1008-hakata})},
%where $\xi$ is a primitive $2{m^2}$-th root of unity. 
%Then we have $t_0^2=1$.
%\end{lemma}
%\begin{proof}
%We put $w=m^2-m/2$ in (\ref{eq:454}).
%Then, using Lemma~\ref{lemma:5.7} we obtain $t_0^2=1$.
%\end{proof}

%%%%% Prop 6.4

\begin{proposition} \label{prop:6.4}
Let $m\equiv 0 \pmod{4}$.
Let $W_{(1),u,a}$ be a spin model given in Theorem~{\rm\ref{thm:main1} (i)} of index $m$,
where $u^4=1$ and $a$ is a primitive $2m^2$-th root of unity.
Then $W_{(1),u,a}$ is equivalent to $W$ defined in {\rm(\ref{eq:1008-hakata})}.
\end{proposition}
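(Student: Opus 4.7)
The strategy is to reduce to the case $u=1$ via Lemma~\ref{lemma:0901}, then identify the index set $X$ with $\ZZ_{m^2}$ through an explicit bijection that carries $W_{(1),1,au^3}$ onto a spin model of the form (\ref{eq:1008-hakata}).

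Since $m \equiv 0 \pmod{4}$ by hypothesis, Lemma~\ref{lemma:0901} yields $W_{(1),u,a} \sim W_{(1),1,\tilde a}$ with $\tilde a = au^3$. Because $u^3$ is a fourth root of unity and $4 \mid 2m^2$, the element $\tilde a$ is again a primitive $2m^2$-th root of unity, so it suffices to show that $W_{(1),1,\tilde a}$ is equivalent to a matrix of the form (\ref{eq:1008-hakata}).

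Define the bijection $\phi\colon X \to \ZZ_{m^2}$ by $\phi(i,\ell,1) = i + m\ell$, where we identify $\ZZ_m$ with $\{0,1,\dots,m-1\}$. For $\alpha = (i,\ell,1)$ and $\beta = (j,\ell',1)$, since $V_{ij}(1,1) = 1$ throughout when $u=1$, we have
\[
W_{(1),1,\tilde a}(\alpha,\beta) = \tilde a^{(j-i)^2 - m(j-i) + 2m(j-i)(\ell'-\ell)}.
\]
On the other hand, $\phi(\beta) - \phi(\alpha) = (j-i) + m(\ell'-\ell)$, and expanding the expression in (\ref{eq:1008-hakata}) gives the exponent
\[
(j-i)^2 - m(j-i) + 2m(j-i)(\ell'-\ell) + m^2(\ell'-\ell)(\ell'-\ell-1).
\]
The two exponents differ by $m^2(\ell'-\ell)(\ell'-\ell-1)$, a multiple of $2m^2$ because the product $(\ell'-\ell)(\ell'-\ell-1)$ of two consecutive integers is always even. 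Hence $W_{(1),1,\tilde a}(\alpha,\beta) = W(\phi(\alpha),\phi(\beta))$, which gives the claimed equivalence.

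The only nontrivial content is the exponent identity modulo $2m^2$, which boils down to the parity of a product of consecutive integers. Verifying that $\tilde a$ remains primitive of order $2m^2$ is a routine divisibility check, and the harder task of absorbing the factor $u^3$ is delegated to Lemma~\ref{lemma:0901}.
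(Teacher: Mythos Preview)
Your argument is correct and, in its core computation, cleaner than the paper's: you use the natural bijection $\phi(i,\ell,1)=i+m\ell$ and observe that the two exponents match modulo $2m^2$ because $(\ell'-\ell)(\ell'-\ell-1)$ is even. The paper instead writes $m=4k$, picks $t\in\ZZ_4$ with $u^3=a^{8k^2t}$, and uses the twisted bijection $\psi(i,\ell)=(4k^2t+1)i+4k\ell$; the extra coefficient $(4k^2t+1)$ in front of $i$ is exactly what turns the base $a$ into $au^3$ inside the exponent.

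That difference in bijection reflects a subtle discrepancy in what gets proved. The matrix $W$ in (\ref{eq:1008-hakata}) is defined with the \emph{original} root $a$, and the paper's $\psi$ lands precisely on that matrix. Your $\phi$ instead yields the matrix of the same form but with $\tilde a=au^3$ in place of $a$. For the purpose of this section---showing that $W_{(1),u,a}$ is a cyclic-group spin model and hence not new---your conclusion is just as good, and your route is more transparent. But if the proposition is read literally as asserting equivalence with the specific $W$ built from $a$, you still owe one line: either show directly that the two cyclic models (for $a$ and for $au^3$) are equivalent, or absorb the change of base into the bijection as the paper does.
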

\begin{proof}
From Lemma~\ref{lemma:0901} it is sufficient to prove that 
$W_{(1),1,au^3}$ is equivalent to $W$.
By assumption, $m=4k$ for some positive integer $k$.
Since $a^{8k^2}$ is a primitive $4$-th root of unity, 
there exists $t\in\ZZ_4$ such that $u^3=a^{8k^2t}$.
%we may prove that $W_{(1),1,a(a^{m^2/2})^t}$ is equivalent to $W_{a,m^2-m/2}$ for $t \in \ZZ_4$.
%For $W_{(1),1,au^3}$ we have $V_{ij}=(1)$ for all $i,j \in \ZZ_m$.
We define a bijection $\psi: \ZZ_m^2 \to \ZZ_{m^2}$ by
\[
\psi(i,\ell)=(4k^2t+1)i+4k\ell
%+4k^2t+4k
\]
for $(i,\ell)\in \ZZ_m^2$.
%In (\ref{eq:453}) we put $m=4k$.
Then 
%from (\ref{eq:1008-hakata}),
for all $i,j, \ell,\ell' \in \ZZ_m$,
\begin{align*}
&(\psi(j,\ell')-\psi(i,\ell))(\psi(j,\ell')-\psi(i,\ell)-m)
\\ &=
((4k^2t+1)(j-i)+4k(\ell'-\ell))((4k^2t+1)(j-i)+4k(\ell'-\ell)-4k)
\\ &=
(8k^2t+1)(8k(\ell-\ell')(i-j)+(i-j)^2+4k(i-j))
\\ &\quad+
32k^2\left(-kt(j-i)(l'-l)
+\frac{kt(j-i)(kt(j-i)+1)}{2}\right.
\\ &\qquad\qquad\quad\left.
+\frac{(l'-l)(l'-l-1)}{2}\right)
%\\ &\equiv (4k^2t+1)^2(j-i)^2+4k(2(\ell'-\ell)-1)(4k^2t+1)(j-i) \pmod{32k^2}
%\\ &\equiv (4k^2t+1)^2(j-i)^2+8k(\ell'-\ell)(j-i)-4k(4k^2t+1)(j-i) \pmod{32k^2}
%\\ &\equiv 16k^2 \cdot kt(j-i)(kt(j-i)-1) 
%\\ &\quad +8k^2t(j-i)^2+(j-i)^2+8k(\ell'-\ell)(j-i)-4k(j-i)\pmod{32k^2}
%\\ &\equiv 8k^2t(j-i)^2+(j-i)^2+8k(\ell'-\ell)(j-i)-4k(j-i)\pmod{32k^2}
\\ &\equiv (8k^2t+1)(8k(\ell-\ell')(i-j)+(i-j)^2+4k(i-j))\pmod{32k^2}.
%\\ &\equiv (\frac{m^2t}{2}+1)(2m(\ell-\ell')(i-j)+(i-j)^2+m(i-j))\pmod{2m^2}.
%(au^3)^{2m(l-l)(i-j)+\epsilon(i,j)}.
\end{align*}
Thus 
\begin{align*}
W( \psi(i,\ell),\psi(j,\ell') )&=
a^{(\psi(j,\ell')-\psi(i,\ell))(\psi(j,\ell')-\psi(i,\ell)-m)}
\nexteq
a^{(8k^2t+1)(8k(\ell-\ell')(i-j)+(i-j)^2+4k(i-j))}
\nexteq
(au^3)^{2m(\ell-\ell')(i-j)+(i-j)^2+m(i-j)}
\nexteq
W_{(1),1,au^3}((i,\ell,1),(j,\ell',1)),
\end{align*}
and we conclude that $W$ is equivalent to $W_{(1),1,au^3}$.
%\begin{align*}
%& W( \psi(i,\ell),\psi(j,\ell') ) \\
%&= a^{ (\psi(j,\ell')-\psi(i,\ell))(\psi(j,\ell')-\psi(i,\ell)-m)} \\
%&= a^{ ((4k^2t+1)(i-j)+4k(\ell-\ell'))((4k^2t+1)(i-j)+4k(\ell-\ell'+1))}  \\
%&= a^{8k^2t(i-j)^2+(i-j)^2+8k(\ell-\ell')(i-j)+4k(i-j)} \\
%&= (a^{m^2t/2+1})^{2m(\ell-\ell')(i-j)+\epsilon(i,j)} \\
%&= W_{(1),1,a^{m^2t/2+1}}((i,\ell,1),(j,\ell',1)).
%\end{align*}
%Therefore 
\end{proof}

To conclude the paper, we note that the decomposability
and identification with known spin models are yet to
be determined for the following cases.
\begin{enumerate}
\item $W_{H,u,a}$: $r=1$, $m\equiv2\pmod4$,
\item $W'_{H,u,b}$: $r=1$,
\item $W_{H,u,a}$ and $W'_{H,u,b}$: $r=2$,
\item $W_{H,u,a}$ and $W'_{H,u,b}$: $r>4$ and $m$ is not
a power of $2$.
\end{enumerate}

%We understand nothing about $W_{H,u,a}$ given in 
%Theorem~\ref{thm:main1}${\rm (i)}$
%for the following cases: 
%(1) the case $m\equiv 2 \pmod{4}$ in Proposition~\ref{prop:6.4},
%(2) the case $r=2$ in Theorem~\ref{thm:main2},
%and 
%(3) the case $m=2^sq$ in Theorem~\ref{thm:main2},
%where $s$ is a positive integer, $q$ is an odd integer.

%%%%% m=2^s q, 
%In this case of $r=2$, we understand nothing.

%%%%% m=2^s q
%Let $r>4$. 
%Then if $(s,q)=(1,3), (2,3), (1,5), (2,5)$,
%then $W_{H,u,a}=W_{H,u,a'} \otimes W'$ holds,
%where $a'$ is a primitive $2^{2s+1}$-th root of unity,
%and $W'$ is a spin model of index $q$.
%For any $(s,q)$, we do not understand about the form of $W'$.

%%%%%%%%%%%%%%%%%%%%%%%%%%%%
%       References
%%%%%%%%%%%%%%%%%%%%%%%%%%%%

%\bibliographystyle{amsplain}
%\bibliography{mrabbrev,publish,papers}

\ifx\undefined\allcaps\def\allcaps#1{#1}\fi\newcommand{\nop}[1]{}
\providecommand{\bysame}{\leavevmode\hbox to3em{\hrulefill}\thinspace}

\end{document}